\newtheorem{theorem}{Theorem}[section]
\newtheorem{proposition}[theorem]{Proposition}
\newtheorem{lemma}[theorem]{Lemma}
\theoremstyle{definition}
\newtheorem{definition}[theorem]{Definition}
\newtheorem{example}[theorem]{Example}
\newtheorem{notation}[theorem]{Notation}
\newtheorem{remark}[theorem]{Remark}
\mathchardef\mhyphen="2D 
\newcommand{\CAT}{\mathcal{C}}
\newcommand{\DEC}{\Phi\mhyphen\mathbf{Dec}_R}
\newcommand{\GDEC}{\Gamma\mhyphen\mathbf{Dec}_R}
\newcommand{\AXDEC}{(\Phi, \lambda)\mhyphen\mathbf{AxDec}_R}
\newcommand{\ALG}{\mathbf{Alg}_R}
\newcommand{\GRP}{\mathbf{Grp}}
\newcommand{\SET}{\mathbf{Set}}
\newcommand{\FUS}{\mathbf{Fus}}
\newcommand{\dash}{\nobreakdash-\hspace{0pt}}
\newcommand{\onto}{\mathrel{\mathrlap{\rightarrow}\rightarrow}}
\newcommand{\conj}[2]{\prescript{#1}{}{#2}}
\newcommand{\K}{\mathcal{K}}
\newcommand{\I}{\mathcal{I}}
\newcommand{\J}{\mathcal{J}}
\let\L\relax
\newcommand{\L}{\mathcal{L}}
\newcommand{\X}{\mathcal{X}}
\newcommand{\Y}{\mathcal{Y}}
\let\U\relax
\newcommand{\U}{\mathcal{U}}
\newcommand{\Z}{\mathbb{Z}}
\let\C\relax
\newcommand{\C}{\mathbb{C}}
\newcommand{\Sym}{\mathrm{Sym}}
\newcommand{\PSL}{\mathrm{PSL}}
\newcommand{\SL}{\mathrm{SL}}
\DeclareMathOperator{\ad}{ad}
\DeclareMathOperator{\Aut}{Aut}
\DeclareMathOperator{\GL}{GL}
\DeclareMathOperator{\Hom}{Hom}
\DeclareMathOperator{\Irr}{Irr}
\DeclareMathOperator{\id}{id}
\DeclareMathOperator{\Miy}{Miy}
\newcommand{\UMiy}{\widehat\Miy}
\DeclareMathOperator{\Char}{char}
\DeclareMathOperator*{\freeprod}{\raisebox{-2pt}{\scalebox{2}{$\ast$}}}
\begin{document}
\title{Decomposition algebras and axial algebras}

\author[T. De Medts]{Tom De Medts}
\address{Tom De Medts \\
    Department of Mathematics: Algebra and Geometry \\
	Ghent University \\
	9000 Gent, Belgium}
\email{tom.demedts@ugent.be}

\author[S. F. Peacock]{Simon F. Peacock}
\address{Simon F. Peacock,
    School of Mathematics \\
    Department of Mathematics \\
    University of Manchester \\
    Oxford Road \\
    Manchester \\
    M13 9PL \\
    UK \\
    and the Heilbronn Institute for Mathematical Research \\
    Manchester \\
    UK}
\email{simon.peacock@bristol.ac.uk}

\author[S. Shpectorov]{Sergey Shpectorov}
\address{Sergey Shpectorov,
    School of Mathematics \\
    Watson Building \\
    University of Birmingham \\
    Edgbaston \\
    Birmingham \\
    B15 2TT \\
    UK}
\email{s.shpectorov@bham.ac.uk}

\author[M. Van Couwenberghe]{Michiel Van Couwenberghe}
\address{Michiel Van Couwenberghe,
    Department of Mathematics: Algebra and Geometry \\
	Ghent University \\
	9000 Gent, Belgium \\
	Ph. D. fellowship of the Research Foundation -- Flanders (FWO)}
\email{michiel.vancouwenberghe@ugent.be}

\keywords{decomposition algebras, axial algebras, fusion laws, Griess algebra, Majorana algebras, representation theory, association schemes, Norton algebras}
\subjclass[2010]{}

\date{\today}

\begin{abstract}
	We introduce decomposition algebras as a natural generalization of axial algebras, Majorana algebras and the Griess algebra. They remedy three limitations of axial algebras:
	(1) They separate fusion laws from specific values in a field, thereby allowing repetition of eigenvalues;
	(2) They allow for decompositions that do not arise from multiplication by idempotents;
	(3) They admit a natural notion of homomorphisms, making them into a nice category.
	
	We exploit these facts to strengthen the connection between axial algebras and groups. In particular, we provide a definition of a universal Miyamoto group which makes this connection functorial under some mild assumptions.
	
	We illustrate our theory by explaining how representation theory and association schemes can help to build a decomposition algebra for a given (permutation) group. This construction leads to a large number of examples.
	
    We also take the opportunity to fix some terminology in this rapidly expanding subject.
\end{abstract}

\maketitle

\section{Introduction}\label{se:intro}

In 1982, Robert Griess proved the existence of the Monster group by constructing a $196\,884$-dimensional non-associative algebra over $\mathbb{R}$, called the \textit{Griess algebra} \cite{Gri82}. A peculiar feature of these algebras is the existence of many idempotents with the property that multiplication by each of these idempotents gives rise to a \textit{decomposition} of the algebra obeying a very precise \textit{fusion law}.

Igor Frenkel, James Lepowsky and Arne Meurmann observed that other algebras similar the Griess algebra can be retrieved as weight-2 components of certain \emph{vertex operator algebras} (VOAs) \cite{FLM88}. In an attempt to axiomatize such algebras, Alexander Ivanov introduced \textit{Majorana algebras}, a large class of real non-associative algebras obeying the same fusion law as the Griess algebra.

Only recently, in 2015, the more general concept of \textit{axial algebras} was introduced by Jonathan Hall, Sergey Shpectorov and Felix Rehren \cite{HRS15a}. Axial algebras are defined over an arbitrary field and have as defining feature that they are generated by idempotents that again give rise to decompositions satisfying a fusion law, which is now allowed to take a much more general shape. The subject has received a lot of attention since then and developed connections as far afield as the regularity theory of some classes of elliptic type PDEs and algebraic solutions of eiconal and minimal surface equations \cites{Tka2019a,Tka2019b}. See also the earlier book \cite{NTV2014}.

In May 2018, a specialized workshop on axial algebras took place at the University of Bristol funded by the Heilbronn Institute for Mathematical Research. It became apparent at this workshop that there is a need for a more general framework to study axial algebras. New observations forced us to generalize the definition even further and to separate fusion laws from the field.
At the same time, we noticed that the crucial aspect of an axial algebra is the existence of the corresponding decompositions, and not so much the fact that these arise from idempotents.

The \textit{decomposition algebras} that we introduce in this paper aim to provide a natural generalization of axial algebras that take all these facts into account.
Our hope that this is a useful framework is further emphasized by the fact that these decomposition algebras form a nice category (in contrast to the setting of axial algebras, where the natural notion of homomorphisms gives rise to a less powerful category).

\smallskip

We begin our paper by introducing (general) \textit{fusion laws} that no longer depend on a ring or field (section~\ref{se:fusionlaws}). 

In section~\ref{se:gradings}, we introduce \textit{gradings} as morphism between fusion laws and group fusion laws. This will be an essential ingredient to make the connection between (axial) decomposition algebras and groups later on. We also explain how to construct such gradings for a given fusion law.

In section~\ref{se:decalg}, we introduce \textit{decomposition algebras}. These algebras axiomatize the essence of Griess algebras, Majorana algebras and axial algebras. We believe that this definition is the right approach to study all known algebras that are reminiscent of axial algebras. Moreover, it is the first definition in this context that allows for a suitable definition of a homomorphism and hence fits into a categorical framework. We explore this framework thoroughly in Appendix~\ref{se:appendix}.

In section~\ref{se:axial}, we explain how axial algebras fit into this framework by defining \textit{axial decomposition algebras} and homomorphisms between axial decomposition algebras.

The important connection between decomposition algebras and groups is discussed in section \ref{se:Miy}, which is the longest section of the paper. We explain why the ``obvious'' connection (the \textit{Miyamoto group}) is not functorial. However, we introduce a more universal connection (the \textit{universal Miyamoto group}) which turns out to be functorial under some mild conditions. This is the subject of Proposition \ref{prop:surj-functor} and Theorem \ref{thm:axial-functor}.

In section \ref{se:rep} and section \ref{se:Nor}, we present an important source of examples of decomposition algebras for a given (permutation) group. This is very closely related to \textit{representation theory} and to the theory of \textit{association schemes} via \textit{Norton algebras}.

\medskip

\paragraph*{\textbf{Acknowledgments}}

We thank the referees for their valuable and insightful comments. We also thank Jon Hall for his suggestions on how to improve the exposition of the paper.

\smallskip

\begin{notation}\label{not:conv}
    We will use functional notation for our maps and morphisms, i.e., when $\varphi \colon A \to B$ is a map, we denote the image of an element $a$ by $\varphi(a)$.
    Consequently, we will also denote conjugation of group elements on the left:
    \[ \conj{g}{h} := g h g^{-1} . \]
\end{notation}

\clearpage

\section{Fusion laws}\label{se:fusionlaws}

In this section, we define (general) fusion laws. In contrast to previous definitions, these will no longer depend on a ring or a field.

\begin{definition}
    A \emph{fusion law}%
    \footnote{In earlier papers on axial algebras, this was referred to as ``the fusion rules'', leading to singular/plural problems. It has also been referred to as a ``fusion table''.}
    is a pair $(X, *)$ where $X$ is a set%
    \footnote{The set $X$ is often, but not always, a finite set.}
    and $*$ is a map from $X \times X$ to $2^X$, where $2^X$ denotes the power set of $X$.
    A fusion law $(X, *)$ is called \emph{symmetric} if $x * y = y * x$ for all $x,y \in X$.
\end{definition}

\begin{definition}\label{def:fus-unit}
    Let $(X, *)$ be a fusion law and let $e \in X$.
    \begin{enumerate}[(i)]
        \item We call $e$ a \emph{unit} if $e * x \subseteq \{ x \}$ and $x * e \subseteq \{ x \}$ for all $x \in X$.
        \item We call $e$ \emph{annihilating} if $e * x = \emptyset$ and $x * e = \emptyset$ for all $x \in X$.
        \item We call $e$ \emph{absorbing} if $e * x \subseteq \{ e \}$ and $x * e \subseteq \{ e \}$ for all $x \in X$.
    \end{enumerate}
\end{definition}

\begin{lemma}
    Let $(X, *)$ be a fusion law.
    If $e,f \in X$ are units with $e \neq f$, then $e * f = \emptyset$.
\end{lemma}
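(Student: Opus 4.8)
The plan is to unwind the definition of a unit (Definition~\ref{def:fus-unit}(i)) and apply it twice, once to $e$ and once to $f$. First, since $e$ is a unit, the condition $e * x \subseteq \{x\}$ applied with $x = f$ gives $e * f \subseteq \{f\}$. Second, since $f$ is a unit, the condition $x * f \subseteq \{x\}$ applied with $x = e$ gives $e * f \subseteq \{e\}$. Combining these two inclusions, $e * f \subseteq \{e\} \cap \{f\}$.

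It then remains only to observe that $\{e\} \cap \{f\} = \emptyset$ because $e \neq f$, whence $e * f = \emptyset$. I do not anticipate any genuine obstacle here: the argument is a direct two-line deduction from the definition, using symmetry of the two defining conditions for a unit rather than any property specific to fusion laws (and in particular not requiring $(X,*)$ to be symmetric). If one wanted, one could equally well extract $e*f \subseteq \{e\}$ from $e$ being a unit via $e * x \subseteq \{x\}$ with... no — one genuinely needs both hypotheses, as a single unit only controls products in which it itself appears, so the only subtlety worth flagging is that the two inclusions come from the two \emph{different} units.
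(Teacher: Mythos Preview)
Your proof is correct and is essentially the same as the paper's: both derive $e * f \subseteq \{e\}$ and $e * f \subseteq \{f\}$ directly from the two unit conditions and conclude from $e \neq f$. The paper compresses this into a single line, but the argument is identical.
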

\begin{proof}
    We have both $e * f \subseteq \{ e \}$ and $e * f \subseteq \{ f \}$.
\end{proof}

\begin{example}[Jordan fusion law]\label{ex:jordan}
	Consider the set $X = \{ e, z, h \}$ with the symmetric fusion law
	\begin{center}
		\renewcommand{\arraystretch}{1.2}
		\setlength{\tabcolsep}{0.75em}
		\begin{tabular}[b]{c|ccc}
			$*$	& $e$ & $z$ & $h$ \\
			\hline
			$e$ & $\{ e \}$ & $\emptyset$ & $\{ h \}$ \\
			$z$ & $\emptyset$ & $\{ z \}$ & $\{ h \}$ \\
			$h$ & $\{ h \}$ & $\{ h \}$ & $\{ e,z \}$
		\end{tabular}
	\end{center}
    Here both $e$ and $z$ are units and accordingly $e*z=\emptyset$.
\end{example}

\begin{example}[Ising fusion law]\label{ex:Ising}
	Consider the set $X = \{ e, z, q, t \}$ with the symmetric fusion law
	\begin{center}
		\renewcommand{\arraystretch}{1.2}
		\setlength{\tabcolsep}{0.75em}
		\begin{tabular}[b]{c|cccc}
			$*$	& $e$ & $z$ & $q$ & $t$ \\
			\hline
			$e$ & $\{ e \}$ & $\emptyset$ & $\{ q \}$ & $\{ t \}$ \\
			$z$ & $\emptyset$ & $\{ z \}$ & $\{ q \}$ & $\{ t \}$ \\
			$q$ & $\{ q \}$ & $\{ q \}$ & $\{ e,z \}$ & $\{ t \}$ \\
			$t$ & $\{ t \}$ & $\{ t \}$ & $\{ t \}$ & $\{ e,z,q \}$
		\end{tabular} 
	\end{center}
    Again, both $e$ and $z$ are units.
\end{example}

\begin{remark}
	A fusion law $(X, *)$ can also be viewed as a map $\omega \colon X \times X \times X \to \{ 0, 1 \}$,
	where we define $\omega(x, y, z) = 1 \iff z \in x * y$.
	As such, it is clear that there is an action of $\Sym(3)$ on the set of all fusion laws.
	It turns out that the Jordan fusion law and the Ising fusion law are invariant under this action.
\end{remark}

\begin{definition}\label{def:FUS}
    Let $(X, *)$ and $(Y, *)$ be two fusion laws.
    A \emph{morphism} from $(X, *)$ to $(Y, *)$ is a map $\xi \colon X \to Y$ such that
    \[ \xi(x_1 * x_2) \subseteq \xi(x_1) * \xi(x_2) \]
    for all $x_1, x_2 \in X$,
    where we have denoted the obvious extension of $\xi$ to a map $2^X \to 2^Y$ also by $\xi$.

    This makes the set of all fusion laws into a category $\FUS$.
\end{definition}

\begin{definition}\label{def:fus-products}
    Let $(X, *)$ and $(Y, *)$ be two fusion laws.
    \begin{enumerate}[(i)]
        \item
            We define the \emph{product} of $(X, *)$ and $(Y, *)$ to be the fusion law $(X \times Y, *)$ given by
            \[ (x_1, y_1) * (x_2, y_2) := \{ (x,y) \mid x \in x_1 * x_2, y \in y_1 * y_2 \} . \]
        \item
            We define the \emph{union} of $(X, *)$ and $(Y, *)$ to be the fusion law $(X \cup Y, *)$,
            where $*$ extends the given fusion laws on $X$ and $Y$ and is defined by
            \[ x * y := \emptyset \]
            for all $x \in X$ and all $y \in Y$.
    \end{enumerate}
\end{definition}

\begin{proposition}
    The product and coproduct in the category $\FUS$ are given by the product and union of fusion laws, respectively,
    as defined in Definition~\ref{def:fus-products}.
\end{proposition}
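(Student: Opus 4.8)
The statement is a standard universal-property verification in the category $\FUS$, so the plan is to check directly that the product and union of fusion laws satisfy the defining universal properties of categorical product and coproduct. I would split the proof into two halves, treating the product and the coproduct (union) separately, and in each case first verify that the candidate object really is a fusion law, then exhibit the structural morphisms, then check universality.

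For the \emph{product}: given fusion laws $(X,*)$ and $(Y,*)$, I would first note that the projections $\pi_X \colon X \times Y \to X$ and $\pi_Y \colon X \times Y \to Y$ are morphisms in $\FUS$; this is immediate from the definition of $*$ on $X \times Y$, since $\pi_X\bigl((x_1,y_1)*(x_2,y_2)\bigr) \subseteq x_1 * x_2 = \pi_X(x_1,y_1) * \pi_X(x_2,y_2)$, and symmetrically for $\pi_Y$. Then, given any fusion law $(Z,*)$ with morphisms $f \colon Z \to X$ and $g \colon Z \to Y$, the unique candidate for a mediating map is $h := (f,g) \colon Z \to X \times Y$, $z \mapsto (f(z), g(z))$; uniqueness is forced by $\pi_X \circ h = f$ and $\pi_Y \circ h = g$ as maps of sets. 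The one thing that genuinely needs checking is that $h$ is a morphism of fusion laws, i.e.\ that $h(z_1 * z_2) \subseteq h(z_1) * h(z_2)$: if $z \in z_1 * z_2$ then $f(z) \in f(z_1)*f(z_2)$ and $g(z) \in g(z_1)*g(z_2)$, so $h(z) = (f(z),g(z))$ lies in $h(z_1)*h(z_2)$ by the very definition of the product fusion law. This is the crux of the product half.

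For the \emph{coproduct}: here I would first check that the union $(X \cup Y, *)$ as defined is well-defined as a fusion law --- there is a mild set-theoretic point that one should take a disjoint union if $X$ and $Y$ overlap, but granting that, the inclusions $\iota_X \colon X \into X \cup Y$ and $\iota_Y \colon Y \into X \cup Y$ are morphisms since $*$ restricts to the original laws on each piece. Given morphisms $f \colon X \to Z$ and $g \colon Y \to Z$, the only set-map $h \colon X \cup Y \to Z$ with $h \circ \iota_X = f$ and $h \circ \iota_Y = g$ is the one defined piecewise by $f$ on $X$ and $g$ on $Y$. To see $h$ is a morphism, take $a, b \in X \cup Y$: if both lie in $X$, then $h(a*b) = f(a*b) \subseteq f(a)*f(b) = h(a)*h(b)$; similarly if both lie in $Y$; and if one lies in $X$ and the other in $Y$, then $a * b = \emptyset$ by definition of the union, so the inclusion $h(a*b) = \emptyset \subseteq h(a)*h(b)$ holds trivially. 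That last, mixed case is exactly where the empty-product convention in Definition~\ref{def:fus-products}(ii) is doing the work.

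The main obstacle is not any single deep step but rather marshalling the bookkeeping cleanly: one must be careful about the extension of a set map $\xi$ to $2^X \to 2^Y$ (so that $\xi(A \cup B) = \xi(A) \cup \xi(B)$, which is what lets the coproduct argument reduce to the homogeneous cases), and one must be slightly careful with the disjointness issue in the union. Neither is serious. I would therefore present the proof compactly, emphasizing the two morphism-verifications above and leaving the uniqueness and the "structural maps are morphisms" observations as one-line remarks, since they are forced at the level of underlying sets by the ordinary universal properties in $\SET$.
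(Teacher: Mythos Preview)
Your proposal is correct and follows the same approach as the paper, just in far greater detail: the paper's proof merely states that the result ``follows easily from the definitions'' and notes that the projection and inclusion maps are morphisms in $\FUS$, leaving the universal-property verification implicit. Your write-up fills in exactly those details (the mediating maps and the morphism checks), so there is no difference in strategy, only in level of exposition.
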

\begin{proof}
    This follows easily from the definitions.
    Notice, in particular, that for given fusion laws $(X, *)$ and $(Y, *)$,
    the projection maps $X \times Y \to X$ and $X \times Y \to Y$ and the inclusion maps
    $X \to X \cup Y$ and $Y \to X \cup Y$ indeed induce morphisms in $\FUS$ as in Definition~\ref{def:FUS}.
\end{proof}

An important class of fusion laws are the group fusion laws.

\begin{definition}\label{def:groupfusionlaw}
	Let $\Gamma$ be a group.
	Then the map
	\[ * \colon \Gamma \times \Gamma \to 2^\Gamma \colon (g, h) \mapsto \{ g h \} \]
	is a \emph{group fusion law}.
	The identity element of $\Gamma$ is the unique unit of the fusion law $(\Gamma, *)$.
\end{definition}

\begin{remark}
	The category $\GRP$ of groups is a \emph{full} subcategory of $\FUS$:
	if $\Gamma$ and $\Delta$ are groups, then the fusion law morphisms from $(\Gamma, *)$ to $(\Delta, *)$
	are precisely those arising from homomorphisms from $\Gamma$ to $\Delta$.
\end{remark}

Two further examples of fusion laws arising in group theory and 
representation theory are given in the following examples.

\begin{example}[Class fusion law]\label{ex:classlaw}
	Let $G$ be a group with a finite number of conjugacy classes and let $X$ be the set of those conjugacy classes.
	Then we can define a fusion law on $X$ by declaring
	\[ E \in C * D \iff E \cap CD \neq \emptyset , \]
	where $CD$ is the setwise product of $C$ and $D$ inside $G$.
	The trivial conjugacy class $\{ 1 \} \subseteq G$ is a unit for this fusion law.
	If $G$ is a finite abelian group, this fusion law coincides with the group fusion law introduced in Definition~\ref{def:groupfusionlaw}.
\end{example}

\begin{example}[Representation fusion law]\label{ex:reprfus}
	Let $G$ be a finite group and let $X = \Irr(G)$ be its set of irreducible (complex) characters.
	Then we can define a fusion law on $X$ by declaring
	\[ \chi \in \chi_1 * \chi_2 \iff \chi \text{ is a constituent of } \chi_1 \otimes \chi_2 . \]
	The trivial character is a unit for this fusion law.
\end{example}

\section{Gradings} \label{se:gradings}

This section introduces the necessary preparations for the important connection between axial algebras and groups. On the level of fusion laws, this connection boils down to a morphism from a given fusion law to a group fusion law. We illustrate how to get the strongest possible connection by introducing the finest (abelian) grading of a fusion law.

\begin{definition}
    \begin{enumerate}[\rm (i)]
        \item 
            Let $(X, *)$ be a fusion law and let $(\Gamma, *)$ be a group fusion law.
            A \emph{$\Gamma$-grading} of $(X, *)$ is a morphism $\xi \colon 
            (X, *) \to (\Gamma, *)$. We call the grading \emph{abelian} if 
            $\Gamma$ is an abelian group and we call it \emph{adequate} if 
            $\xi(X)$ generates $\Gamma$.
        \item 
            Every fusion law admits a $\Gamma$-grading where $\Gamma$ is the trivial group; we call this the \emph{trivial} grading.
        \item 
            Let $(X, *)$ be a fusion law. We say that a $\Gamma$-grading $\xi$ of $(X, *)$ is a \emph{finest grading} of $(X, *)$ if every grading of $(X, *)$ factors uniquely through $(\Gamma, *)$, in other words, if for each $\Lambda$-grading $\zeta$ of $(X, *)$, there is a unique group homomorphism $\rho \colon \Gamma \to \Lambda$ such that $\zeta = \rho \circ \xi$.
            (In categorical terms, this can be rephrased as the fact that $\xi$ is an initial object in the category of gradings of $(X, *)$.)
            
            Similarly, we say that an abelian $\Gamma$-grading $\xi$ of $(X, *)$ is a \emph{finest abelian grading} of $(X, *)$ if every abelian grading of $(X, *)$ factors uniquely through $(\Gamma, *)$.
    \end{enumerate}
\end{definition}

\begin{proposition}\label{pr:finest}
    Every fusion law $(X, *)$ admits a unique finest grading, 
    given by the group with presentation
    \[ \Gamma_X := \langle \gamma_x, x \in X \mid \gamma_x \gamma_y = 
    \gamma_z \text{ whenever } z \in x * y \rangle , \]
    with grading map $\xi \colon (X, *) \to (\Gamma_X, *) \colon x 
    \mapsto \gamma_x$. 
    Similarly, there is a unique finest abelian grading, given by the abelianization $\Gamma_X/[\Gamma_X, \Gamma_X]$ of $\Gamma_X$.
    Both gradings are adequate.
\end{proposition}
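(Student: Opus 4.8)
The plan is to derive everything from the universal property of a group presentation. First I would check that $\xi \colon x \mapsto \gamma_x$ is indeed an adequate $\Gamma_X$\dash grading of $(X, *)$. Adequacy is immediate, since the $\gamma_x$ generate $\Gamma_X$ by construction, so the only thing to verify is that $\xi$ is a morphism of fusion laws, i.e.\ that $\xi(x_1 * x_2) \subseteq \xi(x_1) * \xi(x_2) = \{ \gamma_{x_1} \gamma_{x_2} \}$ for all $x_1, x_2 \in X$. This says precisely that $\gamma_z = \gamma_{x_1} \gamma_{x_2}$ for every $z \in x_1 * x_2$, which is one of the defining relations of $\Gamma_X$.

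Next I would establish the universal property. Let $\zeta \colon (X, *) \to (\Lambda, *)$ be an arbitrary $\Lambda$\dash grading. Since $\Lambda$ is a group, the fusion-law condition $\zeta(x * y) \subseteq \zeta(x) * \zeta(y) = \{ \zeta(x) \zeta(y) \}$ forces $\zeta(z) = \zeta(x) \zeta(y)$ whenever $z \in x * y$. Hence the assignment $\gamma_x \mapsto \zeta(x)$ respects all the defining relations of $\Gamma_X$, so by the universal property of presentations it extends to a group homomorphism $\rho \colon \Gamma_X \to \Lambda$; because $\GRP$ is a full subcategory of $\FUS$, this $\rho$ is automatically a morphism of fusion laws, and by construction $\rho \circ \xi = \zeta$. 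For uniqueness, any homomorphism $\rho'$ with $\rho' \circ \xi = \zeta$ satisfies $\rho'(\gamma_x) = \zeta(x) = \rho(\gamma_x)$, and the $\gamma_x$ generate $\Gamma_X$, so $\rho' = \rho$. Thus $\xi$ is an initial object in the category of gradings of $(X, *)$, hence a finest grading; since initial objects are unique up to unique isomorphism, this also gives the asserted uniqueness of the finest grading.

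For the finest abelian grading, let $\pi \colon \Gamma_X \to \Gamma_X/[\Gamma_X, \Gamma_X]$ be the abelianization map and set $\bar\xi := \pi \circ \xi$, which is again an adequate abelian grading since $\pi$ is surjective. Given any abelian grading $\zeta \colon (X, *) \to (\Lambda, *)$, the homomorphism $\rho \colon \Gamma_X \to \Lambda$ produced above has abelian image, hence kills $[\Gamma_X, \Gamma_X]$ and factors as $\rho = \bar\rho \circ \pi$ for a unique $\bar\rho \colon \Gamma_X/[\Gamma_X, \Gamma_X] \to \Lambda$; then $\bar\rho \circ \bar\xi = \zeta$, and uniqueness of $\bar\rho$ follows as before because the images of the $\gamma_x$ generate the abelianization. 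So $\bar\xi$ is a finest abelian grading, again unique up to unique isomorphism.

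The argument is entirely formal, and I do not expect a genuine obstacle. The one point that deserves a moment's care is the direction of the containment in Definition~\ref{def:FUS}: it is exactly the fact that a morphism \emph{into} a group fusion law is forced to send $x * y$ into the singleton $\{ \xi(x)\xi(y) \}$ that makes the relations $\gamma_x \gamma_y = \gamma_z$ (for $z \in x * y$) simultaneously necessary for $\xi$ to be a grading and sufficient to push any other grading through $\Gamma_X$.
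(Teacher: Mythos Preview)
Your proof is correct and follows essentially the same route as the paper's: both verify that $\xi$ is a fusion-law morphism directly from the defining relations, then push an arbitrary grading $\zeta$ through $\Gamma_X$ via the universal property of the presentation, with uniqueness coming from the fact that the $\gamma_x$ generate. Your version is slightly more explicit in spelling out the abelian case via the universal property of abelianization (the paper just says ``similar''), but there is no substantive difference.
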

\begin{proof}
	In order to verify that the map $\xi \colon (X, *) \to (\Gamma_X, *) \colon x \mapsto \gamma_x$ is a morphism of fusion laws,
	we have to check that $\xi(z) \in \xi(x) * \xi(y)$ for all $z \in x * y$.
	This is clear from the definition of $\Gamma_X$, since $\xi(z) = \gamma_z$ and $\xi(x) * \xi(y) = \{ \gamma_x \gamma_y \}$.
	Clearly, $\xi$ is then an adequate grading since $\Gamma_X$ is generated by the elements $\gamma_x$.
	
	Assume now that $\zeta \colon (X, *) \to (\Lambda, *)$ is another grading of $(X, *)$.
	If $x,y,z \in X$ satisfy $z \in x * y$, then $\zeta(z) \in \zeta(x) * \zeta(y) = \{ \zeta(x)\zeta(y) \}$, so the elements
	$\zeta(x)$ satisfy the defining relations of the generators $\gamma_x$ in the presentation for $\Gamma_X$.
	This implies that the map $\rho \colon \Gamma_X \to \Lambda \colon \gamma_x \mapsto \zeta(x)$ is a well defined group homomorphism,
	with $\zeta = \rho \circ \xi$. Since $\xi$ is adequate, the identity $\zeta = \rho \circ \xi$ also uniquely determines the group homomorphism $\rho$.
	
	The proof of the remaining statement is similar.
\end{proof}

\begin{remark}
    There is a lot of ``collapsing'' in the group $\Gamma_X$:
    \begin{enumerate}[\rm (a)]
        \item 
            If $y \in x * y$ for some $y \in X$, then $\gamma_x = 1$ in $\Gamma_X$. In particular, $\gamma_x=1$ for each non-annihilating unit $x \in X$.
        \item 
            All $\gamma_z$, where $z$ runs through some fixed set $x * y$, are equal to each other in~$\Gamma_X$.
        \item
            If $z$ belongs to $x * y$ and to $x * y'$, then $\gamma_y = \gamma_{y'}$.
            Similarly, if $z$ belongs to $x * y$ and to $x' * y$, then $\gamma_x = \gamma_{x'}$.
    \end{enumerate}
\end{remark}

From this it is clear that \(\Gamma_X\) is trivial for most fusion laws $(X, *)$, i.e.,  
they only admit the trivial grading.
We call a fusion law $(X, *)$ \emph{graded} if $\Gamma_X \neq 1$ and \emph{ungraded} otherwise.
It will turn out that graded fusion laws are more interesting for our purposes.

\begin{example} \label{Z/2 examples}
    The Jordan fusion law in Example \ref{ex:jordan} is $\Z/2\Z$-graded. Indeed, 
    the map $\xi \colon X \to \Z/2\Z$ mapping $e$ and $z$ to $0$ and $h$ to $1$ 
    is a fusion law morphism. Notice that this is the finest grading 
    of the Jordan fusion law.

    Similarly, the Ising fusion law in Example \ref{ex:Ising} admits a 
    $\Z/2\Z$-grading: the map $\xi \colon X \to \Z/2\Z$ mapping $e$, $z$ and $q$ to 
    $0$ and $t$ to $1$ is a fusion law morphism. Again, this is the finest 
    grading of the Ising fusion law.
\end{example}

In the remainder of this section, we describe the finest grading of two 
special types of fusion laws: class fusion laws and representation fusion laws.

The class fusion law of a group $G$ was introduced in Example 
\ref{ex:classlaw}. For $g\in G$, let $\bar g$ denote the image of $g$ in 
$G/[G,G]$. 

\begin{proposition}\label{pr:fus}
	Let $(X, *)$ be the class fusion law of a group $G$. Then the finest 
	grading of $(X, *)$ is given by the group $\Gamma = G/[G,G]$ with 
	grading map $X \to \Gamma \colon \conj{G}{g} 
	\mapsto \overline{g}$.
\end{proposition}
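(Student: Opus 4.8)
The plan is to combine the explicit presentation of the finest grading from Proposition~\ref{pr:finest} with a pair of mutually inverse homomorphisms between $\Gamma_X$ and $G/[G,G]$. Throughout, write $\xi \colon (X,*) \to (\Gamma_X, *)$, $\conj{G}{g} \mapsto \gamma_{\conj{G}{g}}$, for the canonical finest grading of Proposition~\ref{pr:finest}, and let $\psi \colon X \to G/[G,G]$ be the map $\conj{G}{g} \mapsto \overline{g}$ from the statement.

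\textbf{The candidate is an adequate grading.} First I would check that $\psi$ is well defined, since conjugate elements of $G$ have equal image in the abelian group $G/[G,G]$. It is a morphism of fusion laws: if $E \in C * D$, then $E \cap CD \neq \emptyset$, so $E = \conj{G}{cd}$ for some $c \in C$ and $d \in D$, whence $\psi(E) = \overline{cd} = \overline{c}\,\overline{d} = \psi(C)\psi(D)$, i.e. $\psi(C * D) \subseteq \psi(C) * \psi(D)$. It is clearly adequate. By the universal property of $\Gamma_X$, there is then a unique homomorphism $\rho \colon \Gamma_X \to G/[G,G]$ with $\psi = \rho \circ \xi$, that is, $\rho(\gamma_{\conj{G}{g}}) = \overline{g}$; since $\psi$ is adequate, $\rho$ is surjective.

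\textbf{Building the inverse.} The crucial observation is that $gh \in (\conj{G}{g})(\conj{G}{h})$ for all $g, h \in G$, so $\conj{G}{gh} \in \conj{G}{g} * \conj{G}{h}$ in the class fusion law, and therefore the defining relations of $\Gamma_X$ give $\gamma_{\conj{G}{gh}} = \gamma_{\conj{G}{g}} \gamma_{\conj{G}{h}}$. Hence $\phi \colon G \to \Gamma_X$, $g \mapsto \gamma_{\conj{G}{g}}$, is a group homomorphism, and it is surjective because the elements $\gamma_x$, $x \in X$, generate $\Gamma_X$. The same observation shows $\Gamma_X$ is abelian: since $hg = g^{-1}(gh)g$ is conjugate to $gh$, we get $\gamma_{\conj{G}{g}} \gamma_{\conj{G}{h}} = \gamma_{\conj{G}{gh}} = \gamma_{\conj{G}{hg}} = \gamma_{\conj{G}{h}} \gamma_{\conj{G}{g}}$. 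Thus $[G,G] \subseteq \ker \phi$, and $\phi$ induces a surjective homomorphism $\overline{\phi} \colon G/[G,G] \to \Gamma_X$ with $\overline{\phi}(\overline{g}) = \gamma_{\conj{G}{g}}$.

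\textbf{Conclusion.} The composite $\rho \circ \overline{\phi}$ fixes every $\overline{g}$ and $\overline{\phi} \circ \rho$ fixes every generator $\gamma_{\conj{G}{g}}$, so both are identity maps; hence $\rho$ is an isomorphism $\Gamma_X \to G/[G,G]$ intertwining $\xi$ and $\psi$. By Proposition~\ref{pr:finest} this shows that the finest grading of $(X,*)$ is $G/[G,G]$ with grading map $\psi$, as claimed. I do not anticipate a real obstacle here: the only delicate point is noticing that the relations of $\Gamma_X$ are exactly strong enough both to make $g \mapsto \gamma_{\conj{G}{g}}$ multiplicative and to force $\Gamma_X$ to be abelian, and both facts reduce to the single identity $\conj{G}{gh} = \conj{G}{hg} \in \conj{G}{g} * \conj{G}{h}$. (Note that the finiteness of the number of conjugacy classes plays no role here beyond making $(X,*)$ a fusion law in the sense of the paper.)
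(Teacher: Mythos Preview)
Your proof is correct and follows essentially the same approach as the paper: both construct mutually inverse homomorphisms between $\Gamma_X$ and $G/[G,G]$ via the map $g \mapsto \gamma_{\conj{G}{g}}$ and its induced inverse. The only cosmetic differences are that you invoke the universal property of $\Gamma_X$ to obtain $\rho$ (the paper checks the relations by hand) and you deduce $[G,G] \subseteq \ker\phi$ by first observing $\Gamma_X$ is abelian, whereas the paper computes $\varphi([g,h]) = \varphi(\conj{g}{h})\varphi(h)^{-1} = 1$ directly.
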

\begin{proof}
	By definition, the finest grading of $(X, *)$ is the group \[ 
	\Gamma_X := \langle \gamma_C, C \in X \mid \gamma_C \gamma_D = \gamma_E 
	\text{ whenever } CD \cap E \neq \emptyset \rangle . \] Consider the map 
	$\varphi \colon G \to \Gamma_X \colon g \mapsto \gamma_{(\conj{G}{g})}$ and 
	notice that $\varphi$ is a group morphism, precisely by the defining relations 
	of $\Gamma_X$.  It is clearly surjective; moreover, $\varphi(\conj{g}{h}) = 
	\varphi(h)$ for all $g,h \in G$.  It follows that for each commutator $[g,h] = 
	g h g^{-1} h^{-1}$, we have $\varphi([g,h]) = \varphi(\conj{g}{h}) 
	\varphi(h)^{-1} = 1$; hence $[G,G] \leq \ker \varphi$.  Hence $\varphi$ 
	induces a group epimorphism $\tilde\varphi \colon \Gamma \to \Gamma_X$.
	
	Finally, the map $\Gamma_X \to \Gamma \colon \gamma_{(\conj{G}{g})} \to 
	g[G,G]$ is well defined because it kills each relator of $\Gamma_X$, and this 
	map provides an inverse of $\tilde\varphi$, showing that it is an isomorphism 
	from $\Gamma_X$ to $\Gamma$. 
\end{proof}

Recall the definition of the representation fusion law from Example 
\ref{ex:reprfus}.

\begin{proposition}\label{pr:rep}
    Let $G$ be a finite group and let $(X,\ast)$ be the representation fusion 
    law of $G$. Then the finest grading of $(X,\ast)$ is given by 
    $\Gamma_X=Z(G)^*=\Irr(Z(G))$ with grading map $X \to \Irr(Z(G))\colon 
    \chi\mapsto \frac{\chi_{Z(G)}}{\chi(1)}$.
\end{proposition}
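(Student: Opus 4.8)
The strategy is to compute the group $\Gamma_X$ from its presentation
\[ \Gamma_X = \langle \gamma_\chi, \chi \in \Irr(G) \mid \gamma_{\chi_1}\gamma_{\chi_2} = \gamma_\chi \text{ whenever } \chi \text{ is a constituent of } \chi_1 \otimes \chi_2 \rangle \]
and to identify it with $\Irr(Z(G))$ via the central character. First I would recall the relevant representation-theoretic fact: for an irreducible character $\chi$ of $G$, the restriction $\chi_{Z(G)}$ is a multiple of a single linear character $\omega_\chi$ of $Z(G)$, namely $\chi_{Z(G)} = \chi(1)\,\omega_\chi$, so that $\omega_\chi = \chi_{Z(G)}/\chi(1)$ is exactly the grading map in the statement. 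The key multiplicative property is that if $\chi$ is a constituent of $\chi_1 \otimes \chi_2$, then $\omega_\chi = \omega_{\chi_1}\omega_{\chi_2}$, because restricting $\chi_1 \otimes \chi_2$ to $Z(G)$ gives $\chi_1(1)\chi_2(1)\,\omega_{\chi_1}\omega_{\chi_2}$, which is isotypic, forcing every irreducible constituent to have central character $\omega_{\chi_1}\omega_{\chi_2}$. This shows the map $\xi\colon \chi \mapsto \omega_\chi$ respects the defining relations, hence induces a well-defined homomorphism $\rho\colon \Gamma_X \to \Irr(Z(G))$ with $\rho(\gamma_\chi) = \omega_\chi$.

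Next I would construct the inverse. For this I need two things: that $\xi$ is surjective onto $\Irr(Z(G))$, and that the $\gamma_\chi$ with the same central character are all equal in $\Gamma_X$ — together these give a two-sided inverse. Surjectivity follows because every linear character of $Z(G)$ appears as $\omega_\chi$ for some $\chi$: decompose the induced character $\lambda^G$ for $\lambda \in \Irr(Z(G))$, or observe that $\lambda$ occurs in the restriction to $Z(G)$ of some irreducible, since $\sum_\chi \chi(1)\chi_{Z(G)}$ covers the regular character of $G$ restricted to $Z(G)$, which is a multiple of the regular character of $Z(G)$. For the second point, suppose $\omega_{\chi_1} = \omega_{\chi_2}$; I want $\gamma_{\chi_1} = \gamma_{\chi_2}$ in $\Gamma_X$. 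The clean way is to use the trivial character $1_G$: it has $\omega_{1_G} = 1$, so $\gamma_{1_G} = 1$ in $\Gamma_X$ by Remark (a) (it is a non-annihilating unit of the representation fusion law). Then for any $\chi$ and its complex conjugate $\overline\chi$, the trivial character is a constituent of $\chi \otimes \overline\chi$, giving $\gamma_\chi \gamma_{\overline\chi} = \gamma_{1_G} = 1$, so $\gamma_{\overline\chi} = \gamma_\chi^{-1}$; in particular $\Gamma_X$ is abelian once we know it is generated by elements each of which commutes with its own inverse — though abelianness actually needs the relations more carefully, so I would instead argue directly that for $\chi_1, \chi_2$ with $\omega_{\chi_1} = \omega_{\chi_2}$, the character $\chi_1 \otimes \overline{\chi_2}$ has central character $1$, hence contains $1_G$ as a constituent only if... — no: its constituents all have central character $1$, and I need some constituent to be $1_G$ itself. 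That need not hold, so the honest route is: pick any constituent $\psi$ of $\chi_1 \otimes \overline{\chi_2}$; then $\gamma_{\chi_1}\gamma_{\overline{\chi_2}} = \gamma_\psi$ and $\omega_\psi = 1$; it remains to show $\gamma_\psi = 1$ for every $\psi$ with $\omega_\psi = 1$, i.e. that the subgroup of $\Gamma_X$ generated by such $\gamma_\psi$ is trivial.

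The main obstacle, therefore, is precisely showing $\gamma_\psi = 1$ in $\Gamma_X$ whenever $\omega_\psi = 1$, equivalently that $\rho$ is injective. I expect the cleanest argument uses that the characters $\psi$ with $\omega_\psi = 1$ are exactly the irreducible constituents of characters inflated from $G/Z(G)$ — more precisely $\psi$ factors through $G/Z(G)$ when $\omega_\psi$ is trivial on... hmm, that is false in general; rather $\omega_\psi = 1$ means $\psi$ is a constituent of a character trivial on $Z(G)$. I would instead exploit that $\Gamma_X$ is abelian (which follows because $\Gamma_X$ is generated by the $\gamma_\chi$ and, using that $1_G \otimes \chi$ has $\chi$ as its unique constituent while products like $\chi_1 \otimes \chi_2 \otimes \chi_3$ can be associated in either order, one gets $\gamma_{\chi_1}\gamma_{\chi_2} = \gamma_{\chi_2}\gamma_{\chi_1}$ from comparing constituents of $\chi_1 \otimes \chi_2$ and $\chi_2 \otimes \chi_1$, which are equal characters). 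Given abelianness, $\Gamma_X$ is the abelianization, so $\Gamma_X$ is a quotient of the free abelian group on $\Irr(G)$ by the relations $[\chi_1] + [\chi_2] = [\psi]$; one then checks that the relation subgroup is exactly the kernel of the map to $\Irr(Z(G)) \cong \widehat{Z(G)}$ sending $[\chi] \mapsto \omega_\chi$, using a counting/rank argument or an explicit spanning set of relations coming from tensoring with linear characters of $G/Z(G)$-inflations and with the conjugates $\overline\chi$. I would finish by verifying that $\rho$ and the map $\gamma_\chi \mapsto \omega_\chi$'s putative inverse (sending a generator $\omega \in \Irr(Z(G))$ to $\gamma_\chi$ for any $\chi$ with $\omega_\chi = \omega$, well-defined by injectivity of $\rho$) compose to the identity both ways, yielding the claimed isomorphism $\Gamma_X \cong \Irr(Z(G))$ with the stated grading map.
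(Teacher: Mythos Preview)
Your setup is sound: the central-character map $\chi\mapsto\omega_\chi$ is indeed a grading, it is surjective onto $\Irr(Z(G))$, and your reduction of injectivity to the single claim ``$\gamma_\psi=1$ in $\Gamma_X$ whenever $\omega_\psi=1$'' is exactly right. But this claim is the whole content of the proposition, and your suggestions for proving it---a ``counting/rank argument'' or ``an explicit spanning set of relations coming from tensoring with linear characters of $G/Z(G)$-inflations''---do not work as stated. The characters $\psi$ with $\omega_\psi=1$ are precisely those with $Z(G)\leq\ker\psi$, i.e.\ the irreducibles of $G/Z(G)$; there is no a~priori reason why the tensor-product relations among \emph{all} irreducibles of $G$ should force each such $\gamma_\psi$ to be trivial, and tensoring with linear characters only moves you within a coset of $[G,G]^\perp$, which need not exhaust the kernel of $\omega$.

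The paper closes this gap with three character-theoretic tools that your plan does not mention. Working with an arbitrary grading $f$ (equivalently, with the universal one into $\Gamma_X$), set $K=\{\chi:\gamma_\chi=1\}$ and $H=\bigcap_{\chi\in K}\ker\chi$. First, the Burnside--Brauer theorem applied to the faithful character $\sum_{\chi\in K}\chi$ of $G/H$ shows $K=\{\chi:H\leq\ker\chi\}$. Second, Clifford theory (via Frobenius reciprocity) shows that any two irreducible constituents of $\psi^G$ for $\psi\in\Irr(H)$ have the same image in $\Gamma_X$, so $\gamma$ factors through restriction to $H$. Third---and this is the decisive step---column orthogonality forces $H\leq Z(G)$: if $x\in H$ were non-central, some $\chi$ would satisfy $|\chi(x)|<\chi(1)$, hence $\chi\overline\chi$ would have a constituent $\theta$ with $x\notin\ker\theta$; but $\gamma_\theta=\gamma_\chi\gamma_\chi^{-1}=1$ puts $\theta\in K$ and hence $H\leq\ker\theta$, a contradiction. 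Once $H\leq Z(G)$, the factorization through $\Irr(Z(G))$ is immediate. None of these three ingredients appears in your plan, and without them the injectivity step remains open.
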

\begin{proof}[\stepcounter{footnote}Proof\,${}^\thefootnote$]
    \footnotetext{Thanks to David Craven and Frieder Ladisch for providing the central argument 
    in this proof. As Frieder Ladisch pointed out to us, this result also follows 
    from \cite{GN}*{Example~3.2 and Corollary~3.7}.}
    Consider an arbitrary adequate grading $f\colon X\to\Gamma$ and define \[K = 
    \left\{ \chi\in \Irr(G) \mid f(\chi)=1 \right\}.\] Let $H = 
    \bigcap_{\chi \in K} \ker \chi$. If \(\chi\in K\) then it is clear that
    \(H\leq \ker\chi\); we aim to show the opposite inclusion. Consider 
    $\theta = \sum_{\chi\in K} 
    \chi$, which may be considered as a character of $G/H$. Since $\theta$ 
    is faithful as a character of $G/H$, by the Burnside--Brauer theorem 
    every irreducible character of $G/H$ is a constituent of some power 
    of $\theta$. Now since $f$ is trivial on each constituent of $\theta$, 
    it also is trivial on all irreducible characters of $G/H$. Thus if 
    \(H\leq \ker\chi\) then \(\chi\in K\). We have now established that
    $K=\{ \chi \in \Irr(G) \mid H \leq \ker\chi \}$. 

    Note that $f(\bar\chi)=f(\chi)^{-1}$. Indeed, $\mathds{1}_G$ is a 
    constituent of $\chi\bar\chi$ ; that is, $\mathds{1}_G\in \chi*\bar\chi$. 
    This means that $f(\chi)f(\bar\chi)=f(\mathds{1}_G)=1$.
    
    Now let $\psi\in\Irr(H)$ and let $\chi$ and $\eta$ be constituents of the 
    induced character $\psi^G$, so that \(\psi\) is a constituent of the 
    restrictions \({\chi}_{{}_H}\) and \({\eta}_{{}_H}\) by Frobenius 
    reciprocity. Thus 
    \(0<\langle\eta_{{}_H},\chi_{{}_H}\rangle=\langle\mathds{1}_H, 
    (\chi\overline\eta)_{{}_H}\rangle\) (where \(\langle-,-\rangle\) represents 
    the inner product of class functions) and hence $\mathds{1}_H$ is a 
    constituent of $(\chi\overline\eta)_{{}_H}$.  Since $H\unlhd G$, a corollary 
    of Clifford's theorem now implies that $\chi\overline\eta$ has a constituent 
    $\theta\in\Irr(G)$ with $H\leq\ker\theta$ (see for example 
    \cite{isaacs:1976}*{Corollary~6.7}). Hence $f(\chi)f(\eta)^{-1}= 
    f(\chi\overline\eta)=f(\theta)=1$. That is, $f(\chi)=f(\eta)$.
    Thus, we obtain a well-defined map $f'\colon\Irr(H)\to \Gamma$ by setting 
    $f'(\psi)=f(\chi)$ for any constituent $\chi$ of $\psi^G$.
  
    Next, we show that $H$ is in the center of $G$, so let us assume that
    there is some non-central $x\in H$. As $x$ is not central, the column 
    orthogonality relations imply that there must be a character $\chi\in 
    \Irr(G)$ such that $\lvert\chi(x)\rvert \leq \chi(1)$ and, therefore, 
    there is a constituent $\theta$ of $\chi\overline\chi$ with 
    $\theta(x) \neq \theta(1)$. On the other hand, $f(\theta) = 
    f(\chi)f(\overline\chi) = 1$, yielding $\theta \in K$. This means that  
    $H\leq\ker\theta$ and so $\theta(x)=\theta(1)$; a contradiction.  
    
    Since $H$ is central, the map $X\to\Irr(H) \colon \chi\mapsto 
    \frac{\chi_H}{\chi(1)}$ is defined and $f$ is the composition of this 
    map and $f'$. Clearly, the map $X\to\Irr(H)$ factors through the 
    similar map $X\to\Irr(Z(G))$, and so the claim of the proposition holds.
\end{proof}

\begin{remark}
	\begin{enumerate}[(i)]
		\item It is immediate from the definition that the finest 
		grading of the union of fusion laws $(X,*)$ and $(Y,*)$ is the 
		free product of $\Gamma_X$ and $\Gamma_Y$ with the obvious grading map.
		\item The similar question about the finest grading of the 
		product $(X\times Y,*)$ is more difficult. It is easy to see that there 
		is a grading of $(X\times Y,*)$ by the group $\Gamma_X\times\Gamma_Y$. 
		However, it is equally easy to find examples where this is not the 
		finest grading. For instance, if $(X, *)$ is an empty fusion law (i.e., $x_1 * x_2 = \emptyset$ for all $x_1,x_2 \in X$)
		and $(Y, *)$ is any fusion law, then the product $(X \times Y, *)$ is again an empty fusion law,
		but the finest grading of an empty fusion law is always a free group.
	\end{enumerate}
\end{remark}

\section{Decomposition algebras} \label{se:decalg}

We are now ready to introduce decomposition algebras. We believe that they provide the right axiomatic framework to study all algebras reminiscent of axial algebras. It is the first definition of such algebras that allows for an interesting definition of homomorphisms. For each choice of a base ring and a fusion law, this will give rise to a corresponding \emph{category of decomposition algebras}.
We refer to Appendix~\ref{se:appendix} for further categorical properties.

\begin{definition}
    Let $R$ be a commutative ring and let $\Phi = (X, *)$ be a fusion law.
    \begin{enumerate}[(i)]
        \item
          A \emph{$\Phi$-decomposition} of an $R$-algebra $A$ (not assumed to be 
          commutative, associative or unital) is a direct sum decomposition $A = 
          \bigoplus_{x \in X} A_x$ (as $R$-modules) such that $A_x A_y \subseteq 
          A_{x * y}$ for all $x,y \in X$, where $A_{Y} := \bigoplus_{y \in Y} 
          A_y$ for all $Y \subseteq X$.
        \item
            A \emph{$\Phi$-decomposition algebra} is a triple $(A, \I, \Omega)$ 
            where $A$ is an $R$-algebra,
            $\I$ is an index set and $\Omega$ is a tuple%
            \footnote{Formally, we could define $\Omega$ as a set and define 
            this ``tuple'' as a map from $\I$ to $\Omega$, but we will
            not do so in order not to make our notation unnecessarily 
            complicated.}
            of $\Phi$-decompositions of $A$ indexed by $\I$.
            We will usually write the corresponding decompositions as $A = 
            \bigoplus_{x \in X} A_x^i$, so
            \[ \Omega = \bigl( ( A_x^i )_{x \in X} \mid i \in \I \bigr) ; \]
            we sometimes use the shorthand notation $\Omega[i] := ( A_x^i )_{x 
            \in X}$.
            Notice that we do not require the decompositions to be distinct.
    \end{enumerate}
    We will often omit the explicit reference to $\Phi$ if it is clear from the 
context and simply talk about decompositions and decomposition algebras.
\end{definition}

\begin{example}
    Consider the following fusion law $(X, *)$ on $X = \{ e,z \}$:
	\begin{center}
		\renewcommand{\arraystretch}{1.2}
		\setlength{\tabcolsep}{0.75em}
		\begin{tabular}[b]{c|cc}
			$*$	& $e$ & $z$ \\
			\hline
			$e$ & $\{ e \}$ & $\emptyset$ \\
			$z$ & $\emptyset$ & $\{ z \}$ \\
		\end{tabular}
	\end{center}
	Let $A$ be any \emph{commutative associative} algebra over a commutative ring $R$.
	Let $\{ a_i \mid i \in \I \} \subseteq A$ be any collection of idempotents in $A$, indexed by some set $\I$.
	For each $i \in \I$, the algebra $A$ decomposes as $A = a_i A \oplus (1-a_i) A$.
	Write $A_e^i := a_i A$ and $A_z^i := (1-a_i) A$.
	Then each decomposition $A = A_e^i \oplus A_z^i$ is indeed an $(X, *)$\dash decomposition.
	If we write $\Omega$ for the $\I$-tuple of all those decompositions,
	then $(A, \I, \Omega)$ is a decomposition algebra.
\end{example}

\begin{example}
    Consider the Jordan fusion law $(X, *)$ from Example~\ref{ex:jordan}.
	Let $J$ be any \emph{Jordan} algebra over a commutative ring $R$.
	Let $\{ a_i \mid i \in \I \} \subseteq J$ be any collection of idempotents in $J$, indexed by some set $\I$.
	For each $i \in \I$, the algebra $J$ admits a \emph{Peirce decomposition} into the \emph{Peirce subspaces} with respect to the idempotent $a_i$
	(see, e.g., \cite[Chapter III]{Jacobson}):
	\[ J = J^i_0 \oplus J^i_1 \oplus J^i_{1/2}. \]
	By \cite[Chapter III, \S 1, Lemma 1]{Jacobson}, each of those decompositions is indeed an $(X, *)$\dash decomposition
	(where $e$ corresponds to $1$, $z$ to $0$ and $h$ to $1/2$).
	If we write $\Omega$ for the $\I$-tuple of all those decompositions,
	then $(A, \I, \Omega)$ is a decomposition algebra.
\end{example}

\begin{remark}
    Let $\Phi = (X, *)$ be a fusion law and let $(A, \I, \Omega)$ be a $\Phi$\dash decomposition algebra.
    If $e \in X$ is annihilating (see Definition~\ref{def:fus-unit}), then each subspace $A^i_e$ is annihilating for the algebra $A$, in the sense that $A^i_e \cdot A = 0$.
    Similarly, if $e \in X$ is absorbing (see Definition~\ref{def:fus-unit}), then each $A^i_e$ is an ideal: $A^i_e \cdot A \subseteq A^i_e$.
\end{remark}

The decomposition algebras with respect to a fixed fusion law form a nice category.
\begin{definition}\label{def:DEC}
    Let $R$ be a commutative ring and let $\Phi = (X, *)$ be a fusion law.
    We define a category $\DEC$ having as objects the $\Phi$\dash decomposition algebras over~$R$.
    If $(A, \I, \Omega_A)$ and $(B, \J, \Omega_B)$ are two objects, with
    \[ \Omega_A = \bigl( ( A_x^i )_{x \in X} \mid i \in \I \bigr) , \qquad
        \Omega_B = \bigl( ( B_x^j )_{x \in X} \mid j \in \J \bigr) , \]
    then the \emph{morphisms} between $(A, \I, \Omega_A)$ and $(B, \J, \Omega_B)$ are defined to be pairs $(\varphi, \psi)$
    where $\varphi \colon A \to B$ is an $R$-algebra morphism and $\psi \colon \I \to \J$ is a map (of sets)
    such that
    \[ \varphi(A_x^i) \subseteq B_x^{\psi(i)} \]
    for all $x \in X$ and all $i \in \I$.
\end{definition}

\begin{proposition}\label{pr:view}
    If $\xi \colon (X, *) \to (Y, *)$ is a fusion law morphism and $(A, \I, \Omega)$ is an $(X, *)$\dash decomposition algebra,
    then $A$ can also be viewed as a $(Y, *)$\dash decomposition algebra $(A, \I, \Sigma)$ by declaring
    \[ A_y^i := A_{\xi^{-1}(y)}^i = \bigoplus_{x \in \xi^{-1}(y)} A_x^i \]
    for each $i \in \I$ and each $y \in Y$.
    This induces a functor
    \[ F_\xi \colon (X, *)\mhyphen\mathbf{Dec}_R \to (Y, *)\mhyphen\mathbf{Dec}_R . \]
\end{proposition}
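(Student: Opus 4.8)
The plan is to verify directly that the triple $(A, \I, \Sigma)$ satisfies the two defining conditions of a $(Y,*)$\dash decomposition algebra, and then to check that sending each morphism $(\varphi, \psi)$ to itself defines the asserted functor $F_\xi$.

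First I would note that the fibres $\xi^{-1}(y)$, as $y$ ranges over $Y$, form a partition of $X$ (some blocks being empty, namely those for which $y \notin \xi(X)$). Hence for each fixed $i \in \I$,
\[ \bigoplus_{y \in Y} A_y^i = \bigoplus_{y \in Y} \bigoplus_{x \in \xi^{-1}(y)} A_x^i = \bigoplus_{x \in X} A_x^i = A , \]
so each $\Sigma[i]$ is genuinely a direct sum decomposition of $A$ as an $R$\dash module.

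Next comes the fusion condition. Fix $i \in \I$ and $y_1, y_2 \in Y$. By bilinearity of the product, $A_{y_1}^i \cdot A_{y_2}^i$ is the sum of the subspaces $A_{x_1}^i \cdot A_{x_2}^i$ with $x_1 \in \xi^{-1}(y_1)$ and $x_2 \in \xi^{-1}(y_2)$, and by the $(X,*)$\dash decomposition hypothesis each such term lies in $A_{x_1 * x_2}^i = \bigoplus_{z \in x_1 * x_2} A_z^i$. The key point is that for any $z \in x_1 * x_2$ we have $\xi(z) \in \xi(x_1 * x_2) \subseteq \xi(x_1) * \xi(x_2) = y_1 * y_2$ by the morphism property of $\xi$ (Definition~\ref{def:FUS}); hence $z \in \xi^{-1}(y_1 * y_2)$ and $A_z^i \subseteq A_{y_1 * y_2}^i$. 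Summing over all contributions gives $A_{y_1}^i \cdot A_{y_2}^i \subseteq A_{y_1 * y_2}^i$, as required, so $(A, \I, \Sigma)$ is a $(Y,*)$\dash decomposition algebra. I expect this step---really just unwinding the definitions and invoking the morphism inequality---to be the only place where any thought is needed, and even there it is genuinely routine.

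Finally, for the functor: on objects set $F_\xi(A, \I, \Omega) := (A, \I, \Sigma)$, and on a morphism $(\varphi, \psi) \colon (A, \I, \Omega_A) \to (B, \J, \Omega_B)$ in $(X, *)\mhyphen\mathbf{Dec}_R$ put $F_\xi(\varphi, \psi) := (\varphi, \psi)$. To see this is a morphism in $(Y, *)\mhyphen\mathbf{Dec}_R$, compute for every $y \in Y$ and $i \in \I$:
\[ \varphi(A_y^i) = \sum_{x \in \xi^{-1}(y)} \varphi(A_x^i) \subseteq \sum_{x \in \xi^{-1}(y)} B_x^{\psi(i)} = B_y^{\psi(i)} . \]
Since $F_\xi$ leaves the underlying algebra map, the index map, and the source and target objects untouched (only reorganising the decompositions), it visibly sends identity morphisms to identity morphisms and respects composition. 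Hence $F_\xi$ is a functor, completing the proof.
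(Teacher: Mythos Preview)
Your proof is correct and follows the same approach as the paper: the heart of the argument in both is the observation that the morphism condition $\xi(x_1 * x_2) \subseteq \xi(x_1) * \xi(x_2)$ yields $\xi^{-1}(y_1) * \xi^{-1}(y_2) \subseteq \xi^{-1}(y_1 * y_2)$, from which the fusion inclusion follows. You are in fact more thorough than the paper, which verifies only the fusion condition and leaves the direct-sum check and the functoriality on morphisms implicit.
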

\begin{proof}
    We have to verify that for all $y,z \in Y$, we have $A_y^i A_z^i \subseteq A_{y * z}^i$.
    By the definition of a fusion law morphism, we have
    \[ \xi^{-1}(y) * \xi^{-1}(z) \subseteq \xi^{-1}(y * z) , \]
    and hence indeed
    \begin{align*}
        A_y^i A_z^i
        &= A_{\xi^{-1}(y)}^i A_{\xi^{-1}(z)}^i \\
        &\subseteq A_{\xi^{-1}(y) * \xi^{-1}(z)}^i \\
        &\subseteq A_{\xi^{-1}(y * z)}^i = A_{y * z}^i,
    \end{align*}
    proving the proposition.
\end{proof}

In Appendix~\ref{se:appendix}, we study the category $\DEC$ in some more detail.

\section{Axial decomposition algebras}\label{se:axial}

In this section, we explain how axial algebras fit into the framework of decomposition algebras.

\begin{definition}
    Let $\Phi = (X, *)$ be a fusion law with a distinguished unit $e \in X$.
    For each $x \in X$, let $\lambda_x \in R$.
    A $\Phi$-decomposition algebra $(A, \I, \Omega)$ will be called \emph{left-axial} (with \emph{parameters} $\lambda_x$)
    if for each $i \in \I$, there is some non-zero $a_i \in A_e^i$ (called a \emph{left axis}) such that:
    \begin{equation}\label{eq:left-axial}
        a_i \cdot b = \lambda_x b \quad \text{for all } x \in X \text{ and for all } b \in A_x^i .
    \end{equation}
    Similarly, $(A, \I, \Omega)$ is a \emph{right-axial} decomposition algebra (with \emph{parameters} $\lambda_x$)
    if for each $i \in \I$, there is some non-zero $a_i \in A_e^i$ (called a \emph{right axis}) such that:
    \begin{equation}\label{eq:right-axial}
        b \cdot a_i = \lambda_x b \quad \text{for all } x \in X \text{ and for all } b \in A_x^i .
    \end{equation}
    Of course, if $A$ is commutative, then we drop the prefix ``left'' or ``right'' and simply talk about axial decomposition algebras.
    We call a (left- or right-)axial decomposition algebra \emph{primitive} if $A_e^i = Ra_i$ for each $i \in \I$.
\end{definition}

\begin{remark}
    Recall from~\cite{HRS} that an \emph{axial algebra} is a commutative algebra $A$ generated by a set $E$ of idempotents (called \emph{axes}),
    such that for each axis $c \in E$, the left multiplication operator $\ad_c \colon A \to A \colon x \mapsto cx$ is semi-simple and
    its eigenspaces multiply according to a given fusion law $\Phi = (X, *)$ with $X \subseteq R$.

    Every axial algebra is an axial decomposition algebra.
    Indeed, if $(A, E)$ is an axial algebra, then for each $c \in E$, there is a corresponding decomposition $A = \bigoplus_{x \in X} A_x^c$,
    so certainly $(A, E, \Omega)$ with $\Omega = \bigl\{ ( A_x^c )_{x \in X} \mid c \in E \bigr\}$ is a decomposition algebra.
    It is indeed axial, with $a_c = c$ for each $c \in E \subseteq A$ and $\lambda_x = x$ for each $x \in X \subseteq R$.

    On the other hand, axial decomposition algebras are more general objects than axial algebras, in four ways:
    \begin{itemize}
        \item
            The elements $a_c \in A$ are not required to be idempotents.
            If the corresponding parameter $\lambda_e \neq 0$ is a unit in $R$ (for example when $R$ is a field), then we can rescale $a_c$ to an idempotent.
            If $\lambda_e = 0$, then $a_c^2 = 0$, i.e., $a_c$ is nilpotent.
        \item
            The algebra $A$ is not assumed to be generated by the axes.
        \item
            By distinguishing between $x \in X$ and $\lambda_x \in R$, we allow the possibility that some of the $\lambda_x \in R$ coincide.
        \item
            The algebra $A$ is not assumed to be commutative.
    \end{itemize}
\end{remark}

We now make the class of (left) axial decomposition algebras into a category.
\begin{definition}
    Let $\Phi = (X, *)$ be a fusion law with a distinguished unit $e \in X$
    and let $\lambda \colon X \to R \colon x \mapsto \lambda_x$ be an arbitrary map, called the \emph{evaluation map}.
    We define a category $\AXDEC$ with as objects the axial $\Phi$-decomposition algebras together with the collection of left axes,
    for the choice of parameters $\lambda_x$ given by the evaluation map.
    In other words, the objects are quadruples $(A, \I, \Omega, \alpha)$, where $(A, \I, \Omega)$ is a $\Phi$-decomposition algebra
    and $\alpha \colon \I \to A \colon i \mapsto a_i$ is a map such that $a_i \in A_e^i$ and \eqref{eq:left-axial} holds.

    The morphisms in this category are the morphisms $(\varphi, \psi) \colon (A, \I, \Omega_A, \alpha) \to (B, \J, \Omega_B, \beta)$
    of decomposition algebras such that $\varphi \circ \alpha = \beta \circ \psi$, i.e.,
    $\varphi$ maps each axis $a_i$ to the corresponding axis $b_{\psi(i)}$.
\end{definition}

\section{The (universal) Miyamoto groups}\label{se:Miy}

Let $\Gamma$ be a finite group fusion law.
To each $\Gamma$-decomposition algebra $(A, \I, \Omega)$, we will associate a subgroup of the automorphism group of $A$,
called the Miyamoto group of $(A, \I, \Omega)$.
We will also construct a cover of this group, which we call the universal Miyamoto group and which has nicer functorial properties than
the Miyamoto group itself.

We will, at the same time, construct subgroups of these Miyamoto groups, one for 
each subgroup of the character group.
\begin{definition}
    Let $R^\times$ be the group of invertible elements of the base ring $R$.
    An \textit{$R$-character} of $\Gamma$ is a group homomorphism $\chi \colon 
    \Gamma \to R^\times$.
    The \textit{$R$-character group} of $\Gamma$ is the group $\X_R(\Gamma)$ 
    consisting of all $R$-characters of $\Gamma$,
    with group operation induced by multiplication in $R^\times$.
    When the base ring $R$ is clear from the context, we will sometimes omit it 
and simply talk about characters and the character group.
\end{definition}
Notice that depending on $R$, the group $\X_R(\Gamma)$ might be infinite even if 
$\Gamma$ is finite.

\begin{definition}\label{def:Miy}
    Let $(A, \I, \Omega)$ be a $\Gamma$-decomposition algebra.
    \begin{enumerate}[(i)]
        \item
            Let $\chi \in \X_R(\Gamma)$.
            For each decomposition $(A^i_g)_{g \in \Gamma} \in \Omega$, we define a linear map
            \[ \tau_{i,\chi} \colon A \to A \colon a \mapsto \chi(g) a \quad \text{for all } a \in A^i_g ; \]
            we call this a \emph{Miyamoto map}.
            It follows immediately from the definitions that each $\tau_{i,\chi}$ is an automorphism of the $R$-algebra $A$.
            Notice that each $\tau_{i,\chi}$ has finite order (dividing the order of $\chi$ in $\X_R(\Gamma)$).
        \item
            Let $\Y$ be any subgroup of the character group $\X_R(\Gamma)$.
            We then define the \emph{Miyamoto group} with respect to $\Y$ as
            \[ \Miy_\Y(A, \I, \Omega) := \langle \tau_{i,\chi} \mid i \in \I, \chi \in \Y \rangle \leq \Aut(A) . \]
            Two important special cases get their own notation:
            \begin{align*}
                \Miy(A, \I, \Omega) &:= \Miy_{\X_R(\Gamma)}(A, \I, \Omega); \\
                \Miy_\chi(A, \I, \Omega) &:= \Miy_{\langle \chi \rangle}(A, \I, \Omega) \quad \text{for a given character } \chi \in \X_R(G).
            \end{align*}
        \item\label{Miy:closed}
            We call $(A, \I, \Omega)$ \emph{Miyamoto-closed} with respect to $\Y$
            if the set $\Omega$ is invariant under the Miyamoto group with 
            respect to $\Y$.
            That is for each $i \in \I$ and each $\chi \in \Y$, there is a permutation%
            \footnote{In the situation where some of the decompositions $(A^j_g)_{g \in \Gamma} \in \Omega$ coincide, there might be some freedom
            in the choice of the permutation $\pi_{i,\chi}$, but this choice will be irrelevant for us.}
            $\pi_{i,\chi}$ of $\I$ such that $\tau_{i,\chi}$ maps each decomposition $(A^j_g)_{g \in \Gamma} \in \Omega$
            to the decomposition $(A^{\pi_{i,\chi}(j)}_g)_{g \in \Gamma} \in \Omega$.
            Notice that in this case, each pair $(\tau_{i,\chi}, \pi_{i,\chi})$ is an automorphism of $(A, \I, \Omega)$ in the category $\GDEC$.
            In particular, the conjugate of a Miyamoto map by a Miyamoto map is again a Miyamoto map.
    \end{enumerate}
\end{definition}

\begin{example}\label{ex:Miy-DMVC}
    The simplest non-trivial example is the case where $\Gamma = \Z/2\Z$ and $\Y = 
    \{ 1, \chi \}$ where $\chi$ maps the non-trivial element of $\Gamma$ to $-1 
    \in R$ (assuming that $-1 \neq 1$ in $R$).
    In the case of axial algebras, we recover the definition of the Miyamoto 
group as in \cite{DMVC}*{Definition 2.5}.
\end{example}
The Miyamoto group is interesting---it is a subgroup of the automorphism group 
of the algebra---but is not so easy to control (cf.  
Example~\ref{ex:Miy-not-functorial} below). It is useful to construct a cover of 
this group, which we call the \emph{universal Miyamoto group}.


\begin{definition}
    We keep the notations from Definition~\ref{def:Miy}
    and assume that $(A, \I, \Omega)$ is Miyamoto-closed with respect to $\Y$.
    Recall our convention from Notation~\ref{not:conv}.
    We define the \emph{universal Miyamoto group} with respect to $\Y$ as the group given by the following presentation.
    For each $i \in I$, we let $\Y_i$ be a copy of the group $\Y$ and we denote its elements by
    \[ \Y_i = \{ t_{i,\chi} \mid \chi \in \Y \} . \]
    For each $a = t_{i,\chi} \in \Y_i$, we write $\overline{a}$ for the corresponding Miyamoto map $\tau_{i,\chi} \in \Miy(A, \I, \Omega)$.
    Notice that for each $i \in \I$, the group
    \[ \overline{\Y_i} := \{ \overline{a} \mid a \in \Y_i \} =  \{ \tau_{i,\chi} \mid \chi \in \Y \} \]
    is an \textit{abelian} subgroup of $\Miy_\Y(A, \I, \Omega)$.
    
    We will define the universal Miyamoto group $\UMiy_{\Y}(A, \I, \Omega)$ as a quotient of the free product $\freeprod_{i \in \I} \Y_i$
    by conjugation relations between the groups~$\Y_i$ that exist ``globally'' between the corresponding groups $\overline{\Y_i}$ in $\Miy(A, \I, \Omega)$.
    More precisely, let $\U := \bigcup_{i \in \I} \Y_i$; for each $a \in \U$, we consider the set
    \begin{equation}\label{eq:Ra}
        R_{\overline a} := \{ (j,k) \in \I \times \I \mid \conj{\overline{a}}{\tau_{j,\chi}} = \tau_{k,\chi} \text{ for all } \chi \in \Y \} .
    \end{equation}
    We then let
    \begin{multline*}
        \UMiy_{\Y}(A, \I, \Omega) \\
            := \left\langle \freeprod_{i \in \I} \Y_i \Bigm\vert \conj{a}{t_{j,\chi}} = t_{k,\chi} \text{ for all }
            a \in \U \text{, all } (j,k) \in R_{\overline a}  \text{ and all } \chi \in \Y \right\rangle.
    \end{multline*}
\end{definition}

\begin{remark}
        The reader might wonder why we only consider conjugation relations that exist globally and do not define the universal Miyamoto group as the group
        \[ \left\langle \freeprod_{i \in \I} \Y_i \Bigm\vert \conj{a}{b} = c \text{ for all }
                a,b,c \in \U \text{ satisfying } \conj{\overline{a}}{\overline{b}} = \overline{c} \right\rangle.
        \]
        instead.
        The problem with this definition is that some conjugation relations might hold ``by coincidence'' and we do not want to transfer those to the universal Miyamoto group. For instance, Theorem~\ref{thm:axial-functor} below would become false with this seemingly easier definition.
\end{remark}

On the other hand, since $(A, \I, \Omega)$ is Miyamoto-closed with respect to $\Y$, we always have many conjugation relations at our disposal.

\begin{lemma}\label{le:umiy}
    Let $i,j \in \I$.
    \begin{enumerate}[\rm (i)]
        \item\label{umiy:id}
        For each $\chi,\chi' \in \Y$, the relation
        \[ \conj{t_{i,\chi}}{t_{j,\chi'}} = t_{\pi_{i,\chi}(j),\chi'} \]
        holds in $\UMiy_\Y(A, \I, \Omega)$.
        \item\label{umiy:eq}
        If $\tau_{i,\chi} = \tau_{j,\chi}$ for all $\chi \in \Y$, then also $t_{i,\chi} = t_{j,\chi}$ for all $\chi \in \Y$.
    \end{enumerate}
\end{lemma}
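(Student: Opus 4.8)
The plan is to prove both parts directly from the presentation of $\UMiy_\Y(A, \I, \Omega)$, using the fact that $(A, \I, \Omega)$ is Miyamoto-closed together with the global conjugation relations encoded in the sets $R_{\overline a}$. For part~\eqref{umiy:id}, the key observation is that, since $\Omega$ is invariant under the Miyamoto group, the relation $\conj{\tau_{i,\chi}}{\tau_{j,\chi'}} = \tau_{\pi_{i,\chi}(j),\chi'}$ holds \emph{globally} in $\Miy_\Y(A, \I, \Omega)$; more precisely, I would first check that the pair $(\tau_{i,\chi}, \pi_{i,\chi})$ being an automorphism of $(A, \I, \Omega)$ in $\GDEC$ forces $\tau_{i,\chi}$ to send the decomposition $\Omega[j]$ to $\Omega[\pi_{i,\chi}(j)]$, and that conjugating a Miyamoto map $\tau_{j,\chi'}$ by $\tau_{i,\chi}$ gives the Miyamoto map associated to the image decomposition, i.e. $\conj{\tau_{i,\chi}}{\tau_{j,\chi'}} = \tau_{\pi_{i,\chi}(j),\chi'}$ for \emph{all} $\chi' \in \Y$. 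This is exactly the condition $(j, \pi_{i,\chi}(j)) \in R_{\overline{t_{i,\chi}}}$. Therefore the defining relations of $\UMiy_\Y(A, \I, \Omega)$, applied with $a = t_{i,\chi}$ and $(j,k) = (j, \pi_{i,\chi}(j)) \in R_{\overline a}$, give precisely $\conj{t_{i,\chi}}{t_{j,\chi'}} = t_{\pi_{i,\chi}(j),\chi'}$ in the universal Miyamoto group, as desired.

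For part~\eqref{umiy:eq}, suppose $\tau_{i,\chi} = \tau_{j,\chi}$ for all $\chi \in \Y$. I would argue that then the identity element $1 \in \Miy$, viewed as $\overline{a}$ with $a$ ranging suitably, or more directly the element $\overline{t_{i,\chi}} \, \overline{t_{j,\chi}}^{-1} = 1$, gives a global conjugation relation. Concretely: the hypothesis says $\overline{t_{i,\chi}} = \overline{t_{j,\chi}}$ in $\Miy_\Y$. The cleanest route is to consider, for a fixed $\chi_0 \in \Y$, the element $a = t_{i,\chi_0}$ and observe that by part~\eqref{umiy:id} (or by Miyamoto-closedness) $\pi_{i,\chi_0}$ and $\pi_{j,\chi_0}$ have the same effect wherever it matters; but the sharper statement I want is that $(i,j)$ and $(j,i)$ both lie in $R_{\overline{1}}$ where $\overline 1$ is the trivial Miyamoto map — however $1$ need not be of the form $\overline a$ for $a \in \U$. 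Instead I would use this: apply the relation from part~\eqref{umiy:id} with the roles set up so that $\conj{t_{i,\chi}}{t_{i,\chi'}} = t_{\pi_{i,\chi}(i),\chi'}$ and note $\pi_{i,\chi}(i) = i$ since $\tau_{i,\chi}$ fixes its own decomposition; then exploit that $\tau_{i,\chi} = \tau_{j,\chi}$ implies $R_{\overline{t_{i,\chi}}} = R_{\overline{t_{j,\chi}}}$ and that $j$ conjugated appropriately returns $i$. The honest way to phrase it: pick any $k$ with $(i,k) \in R_{\overline{t_{i,\chi}}}$, so also $(i,k) \in R_{\overline{t_{j,\chi}}}$ since the two Miyamoto maps agree; chasing $k = \pi_{i,\chi}(i) = i$, the relations become $\conj{t_{i,\chi}}{t_{i,\chi'}} = t_{i,\chi'}$ and $\conj{t_{j,\chi}}{t_{i,\chi'}} = t_{i,\chi'}$, which is not yet $t_{i,\chi'} = t_{j,\chi'}$.

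The genuinely delicate point — and the step I expect to be the main obstacle — is part~\eqref{umiy:eq}: extracting the equality $t_{i,\chi} = t_{j,\chi}$ inside the \emph{free product modulo global relations}, when a priori the only thing we know is that the images $\overline{t_{i,\chi}}$ and $\overline{t_{j,\chi}}$ coincide downstairs. I would resolve this by finding a global conjugation witness: since $\tau_{i,\chi} = \tau_{j,\chi}$ for all $\chi$, for \emph{every} $\ell \in \I$ and every $\chi_0 \in \Y$ we have $\conj{\overline{t_{i,\chi_0}}}{\tau_{\ell,\chi}} = \conj{\overline{t_{j,\chi_0}}}{\tau_{\ell,\chi}}$ for all $\chi$, hence $R_{\overline{t_{i,\chi_0}}} = R_{\overline{t_{j,\chi_0}}}$; and moreover, taking $\ell = i$, the permutation $\pi_{j,\chi_0}$ (a valid choice for $\tau_{j,\chi_0} = \tau_{i,\chi_0}$) can be taken to fix $i$ — wait, this still circles. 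The clean fix is to instead observe directly that $t_{i,\chi}$ and $t_{j,\chi}$ are conjugate-by-identity: apply part~\eqref{umiy:id} with $a = t_{j,\chi''}$ for a well-chosen $\chi''$ and use that $\pi_{j,\chi''}$ sends $i$ to $j$ precisely when $\tau_{j,\chi''}$ sends $\Omega[i]$ to $\Omega[j]$, which — because $\Omega[i]$ and $\Omega[j]$ already induce the same Miyamoto maps — happens for a suitable element; combining $\conj{t_{j,\chi''}}{t_{i,\chi}} = t_{j,\chi}$ with $\conj{\overline{t_{j,\chi''}}}{\tau_{i,\chi}} = \conj{\overline{t_{j,\chi''}}}{\tau_{j,\chi}} = \tau_{j,\chi}$ forces the equality. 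I would present this by carefully identifying the correct permutation $\pi$ and verifying $(i,j) \in R_{\overline{t_{j,\chi''}}}$, which is exactly where Miyamoto-closedness and the hypothesis $\tau_{i,\chi}=\tau_{j,\chi}$ must be combined; all remaining manipulations are then routine group-theoretic bookkeeping inside the presentation.
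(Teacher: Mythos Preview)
Your argument for part~\eqref{umiy:id} is correct and matches the paper's: Miyamoto-closedness gives $\conj{\tau_{i,\chi}}{\tau_{j,\chi'}} = \tau_{\pi_{i,\chi}(j),\chi'}$ for \emph{every} $\chi' \in \Y$, so $(j,\pi_{i,\chi}(j)) \in R_{\overline{t_{i,\chi}}}$, and the defining relation fires.

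Part~\eqref{umiy:eq}, however, has a genuine gap. You circle through three attempts and the last one --- finding some $\chi''$ with $\pi_{j,\chi''}(i) = j$ --- would fail: there is no reason any Miyamoto map should carry the decomposition $\Omega[i]$ to $\Omega[j]$. The two decompositions can be distinct (so no $\pi_{\bullet,\bullet}$ sends $i$ to $j$) while still inducing identical Miyamoto maps for every $\chi \in \Y$.

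The missing idea is that you are only using pairs in $R_{\overline a}$ of the special form $(j,\pi_{i,\chi}(j))$ produced by part~\eqref{umiy:id}, whereas $R_{\overline a}$ is defined purely by what happens in $\Miy_\Y$ and may contain other pairs. Go back to the definition: with $a = t_{i,\chi}$, we have $(i,j) \in R_{\overline a}$ if and only if $\conj{\tau_{i,\chi}}{\tau_{i,\chi'}} = \tau_{j,\chi'}$ for all $\chi' \in \Y$. Since $\overline{\Y_i}$ is abelian, the left side is just $\tau_{i,\chi'}$, and the hypothesis says this equals $\tau_{j,\chi'}$. Hence $(i,j) \in R_{\overline{t_{i,\chi}}}$ directly, so the presentation gives $\conj{t_{i,\chi}}{t_{i,\chi'}} = t_{j,\chi'}$ in $\UMiy_\Y$. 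Now $t_{i,\chi}$ and $t_{i,\chi'}$ lie in the abelian subgroup $\Y_i$, so the left side is $t_{i,\chi'}$, and you are done. This is exactly the paper's two-line argument; the point you missed is that the relevant pair $(i,j)$ enters $R_{\overline a}$ via the abelianness of $\overline{\Y_i}$ combined with the hypothesis, not via any permutation $\pi$.
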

\begin{proof}
    \begin{enumerate}[\rm (i)]
        \item
        Let $a = t_{i,\chi}$ for some $\chi \in \Y$.
        Since $(A, \I, \Omega)$ is Miyamoto-closed with respect to $\Y$, we have
        $\conj{\tau_{i,\chi}}{\tau_{j,\chi'}} = \tau_{\pi_{i,\chi}(j),\chi'}$ for all $\chi' \in \Y$
        and therefore $(j, \pi_{i,\chi}(j)) \in R_{\overline a}$.
        It follows that all relations of the form
        \[ \conj{t_{i,\chi}}{t_{j,\chi'}} = t_{\pi_{i,\chi}(j),\chi'} \]
        hold in $\UMiy_\Y(A, \I, \Omega)$.
        \item
        Let $a = t_{i,\chi}$ for some $\chi \in \Y$.
        Recall that $\overline{\Y_i}$ is abelian, hence $\tau_{i,\chi}$ commutes with $\tau_{i,\chi'}$ for all $\chi' \in \Y$.
        Since $\tau_{i,\chi'} = \tau_{j,\chi'}$, it follows that $(i, j) \in R_{\overline a}$.
        Therefore, the relations
        \[ \conj{t_{i,\chi}}{t_{i,\chi'}} = t_{j, \chi'} \]
        hold in $\UMiy_\Y(A, \I, \Omega)$.
        Since $t_{i,\chi}$ and $t_{i,\chi'}$ both belong to the abelian group $\Y_i \leq \UMiy_\Y(A, \I, \Omega)$, we conclude that the relation
        $t_{i,\chi'} = t_{j, \chi'}$ holds in $\UMiy_\Y(A, \I, \Omega)$.
        \qedhere
    \end{enumerate}
\end{proof}

\begin{proposition}\label{pr:central}
    Let $\Y \leq \X_R(\Gamma)$ and let $(A, \I, \Omega)$ be Miyamoto-closed with respect to $\Y$.
    Then $\UMiy_\Y(A, \I, \Omega)$ is a central extension of $\Miy_\Y(A, \I, \Omega)$.
\end{proposition}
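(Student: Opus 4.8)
The plan is to exhibit the natural surjection $\pi \colon \UMiy_\Y(A, \I, \Omega) \onto \Miy_\Y(A, \I, \Omega)$ with $\pi(t_{i,\chi}) = \tau_{i,\chi}$, and then to prove that $\ker \pi$ is central in $\UMiy_\Y(A,\I,\Omega)$.

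To build $\pi$, I would first note that for each fixed $i \in \I$ the assignment $\chi \mapsto \tau_{i,\chi}$ is a group homomorphism $\Y \to \Aut(A)$, because $\tau_{i,\chi}\tau_{i,\chi'}$ and $\tau_{i,\chi\chi'}$ both act on $A_g^i$ as multiplication by $\chi(g)\chi'(g)$. Hence the universal property of the free product produces a homomorphism $\freeprod_{i \in \I}\Y_i \to \Miy_\Y(A,\I,\Omega) \colon t_{i,\chi} \mapsto \tau_{i,\chi}$, which is surjective since the $\tau_{i,\chi}$ generate $\Miy_\Y(A,\I,\Omega)$. Every defining relator of $\UMiy_\Y(A,\I,\Omega)$ has the form $\conj{a}{t_{j,\psi}}\,t_{k,\psi}^{-1}$ with $a = t_{i,\chi} \in \U$, $(j,k) \in R_{\overline a}$ and $\psi \in \Y$, and it is sent to $\conj{\overline a}{\tau_{j,\psi}}\,\tau_{k,\psi}^{-1} = \conj{\tau_{i,\chi}}{\tau_{j,\psi}}\,\tau_{k,\psi}^{-1} = 1$ by the very definition~\eqref{eq:Ra} of $R_{\overline a}$. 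So this homomorphism factors through $\UMiy_\Y(A,\I,\Omega)$, yielding the desired $\pi$.

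To show $\ker\pi \leq Z(\UMiy_\Y(A,\I,\Omega))$, let $n \in \ker\pi$. Since $\UMiy_\Y(A,\I,\Omega)$ is generated by the $t_{i,\chi}$, it suffices to prove $\conj{n}{t_{i,\chi}} = t_{i,\chi}$ for all $i \in \I$ and $\chi \in \Y$. Write $n$ as a word $t_{j_1,\psi_1}\cdots t_{j_m,\psi_m}$ in the generators (absorbing inverses using $t_{j,\psi}^{-1} = t_{j,\psi^{-1}}$). Applying Lemma~\ref{le:umiy}\eqref{umiy:id} repeatedly, we obtain, for \emph{every} $\chi' \in \Y$, that $\conj{n}{t_{i,\chi'}} = t_{\sigma(i),\chi'}$, where $\sigma := \pi_{j_1,\psi_1} \circ \cdots \circ \pi_{j_m,\psi_m}$ is one fixed permutation of $\I$ that does \emph{not} depend on $\chi'$; this independence is exactly what Lemma~\ref{le:umiy}\eqref{umiy:id} provides, since the new index $\pi_{i,\chi}(j)$ appearing there does not involve the character of the conjugated generator. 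Applying $\pi$ and using $\pi(n) = 1$ gives $\tau_{i,\chi'} = \conj{\pi(n)}{\tau_{i,\chi'}} = \tau_{\sigma(i),\chi'}$ for all $\chi' \in \Y$. Now Lemma~\ref{le:umiy}\eqref{umiy:eq}, applied to the indices $i$ and $\sigma(i)$, gives $t_{i,\chi'} = t_{\sigma(i),\chi'}$ in $\UMiy_\Y(A,\I,\Omega)$ for all $\chi' \in \Y$; in particular $\conj{n}{t_{i,\chi}} = t_{\sigma(i),\chi} = t_{i,\chi}$. Hence $n$ commutes with every generator and therefore lies in the centre.

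I expect the main obstacle to be this last paragraph, and within it the crucial point is keeping the permutation $\sigma$ independent of $\chi'$, so that the identities $\tau_{i,\chi'} = \tau_{\sigma(i),\chi'}$ hold \emph{simultaneously} for all $\chi' \in \Y$ and Lemma~\ref{le:umiy}\eqref{umiy:eq} can be invoked; a single such identity for one character would not suffice, and this is precisely why the presentation of $\UMiy_\Y(A,\I,\Omega)$ was set up with the global relation sets $R_{\overline a}$. One should also record that the permutations $\pi_{j,\psi}$ need not be unique when some decompositions in $\Omega$ coincide, but any fixed choice works throughout, as Lemma~\ref{le:umiy} is already phrased for such a choice.
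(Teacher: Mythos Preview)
Your proof is correct and follows essentially the same route as the paper's: construct the epimorphism $\pi$ (the paper simply asserts its existence, you give a bit more detail), then for $n \in \ker\pi$ use Lemma~\ref{le:umiy}\eqref{umiy:id} iteratively to see that $\conj{n}{t_{i,\chi'}} = t_{\sigma(i),\chi'}$ with $\sigma$ independent of $\chi'$, deduce $\tau_{i,\chi'} = \tau_{\sigma(i),\chi'}$ for all $\chi'$, and conclude via Lemma~\ref{le:umiy}\eqref{umiy:eq}. Your emphasis on the $\chi'$-independence of $\sigma$ is exactly the point the paper records when it says $j_k$ ``only depends on $z$ and $j$ but not on $\chi'$''.
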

\begin{proof}
    Let $\widehat G := \UMiy_{\Y}(A,\I,\Omega)$, $G := \Miy_{\Y}(A,\I,\Omega)$ and
    $\U := \bigcup_{i \in \I} \Y_i \subseteq \widehat G$; then $\widehat G = \langle \U \rangle$.
    It is immediately clear from the definition of $\UMiy_{\Y}(A, \I, \Omega)$ that the map
    $\U \to G \colon a \mapsto \overline{a}$ extends to an epimorphism $\Phi \colon \widehat G \to G$;
    it remains to show that $\ker\Phi$ is central.
    
    Let $z \in \ker\Phi$ be arbitrary; as each generator $a \in \U$ has finite order, we can write $z = a_m \dotsm a_1$ with $a_i \in \U$.
    We have to show that $\conj{z}{b} = b$ for each $b = t_{j, \chi'} \in \U$.
    Fix such an element $b \in \U$.
    For each $k \in \{ 0,\dots,m \}$, we write
    \begin{align*}
        b_k &:= \conj{a_k \dotsm a_1}{b} \in \widehat G \quad \text{and} \\
        c_k &:= \conj{\overline a_k \dotsm \overline a_1}{\overline b} \in G. 
    \end{align*}
%
    By repeatedly applying Lemma~\ref{le:umiy}\eqref{umiy:id}, we see that each $b_k$ is again of the form $t_{j_k, \chi'}$ for some $j_k \in \I$
    (which only depends on $z$ and $j$ but not on $\chi'$)
    and that $c_k = \overline{b_k}$ for each $k$.
    
    In particular, $\overline{b_m} = c_m = \conj{\Phi(z)}{\overline{b}} = \overline{b}$ with $b = t_{j,\chi'}$ and $b_m = t_{j_m,\chi'}$.
    Hence $\tau_{j_m, \chi'} = \tau_{j, \chi'}$.
    Because this holds for all $\chi' \in \Y$, Lemma~\ref{le:umiy}\eqref{umiy:eq} now implies that $t_{j_m, \chi'} = t_{j, \chi'}$ for all $\chi'$.
    Varying $j \in \I$ finishes the proof.
\end{proof}

For \textit{surjective} morphisms between decomposition algebras, both $\Miy_\Y$ and $\UMiy_\Y$ are functorial.
The following easy lemma is the key point.
\begin{lemma}\label{le:tau_i}
    Let $(\varphi, \psi)$ be a morphism between two $\Gamma$-decomposition algebras $(A, \I, \Omega_A)$ and $(B, \J, \Omega_B)$.
    Then for each $i \in \I$ and $\chi \in \X_R(\Gamma)$, we have $\varphi \circ \tau_{i,\chi} = \tau_{\psi(i),\chi} \circ \varphi$.
\end{lemma}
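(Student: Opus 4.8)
The plan is to unwind the definitions on both sides and observe that the equality of linear maps $\varphi \circ \tau_{i,\chi}$ and $\tau_{\psi(i),\chi} \circ \varphi$ can be checked on each graded piece $A_g^i$ of the decomposition indexed by $i$. Since $A = \bigoplus_{g \in \Gamma} A_g^i$ as $R$-modules and both composites are $R$-linear, it suffices to show that the two maps agree on an arbitrary element $a \in A_g^i$ for each $g \in \Gamma$.

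So first I would fix $i \in \I$, fix $\chi \in \X_R(\Gamma)$, fix $g \in \Gamma$, and take $a \in A_g^i$. On the one hand, by the definition of the Miyamoto map $\tau_{i,\chi}$ (Definition~\ref{def:Miy}(i)), we have $\tau_{i,\chi}(a) = \chi(g)\, a$, so $\varphi(\tau_{i,\chi}(a)) = \varphi(\chi(g)\, a) = \chi(g)\, \varphi(a)$, using that $\varphi$ is $R$-linear. On the other hand, since $(\varphi, \psi)$ is a morphism of $\Gamma$-decomposition algebras, the defining property in Definition~\ref{def:DEC} gives $\varphi(A_g^i) \subseteq B_g^{\psi(i)}$, so $\varphi(a) \in B_g^{\psi(i)}$; hence by the definition of $\tau_{\psi(i),\chi}$ applied to the decomposition $\Omega_B[\psi(i)]$, we get $\tau_{\psi(i),\chi}(\varphi(a)) = \chi(g)\, \varphi(a)$. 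Comparing the two expressions yields $\varphi(\tau_{i,\chi}(a)) = \tau_{\psi(i),\chi}(\varphi(a))$, which is exactly the claimed identity evaluated at $a$.

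Since this holds for every $a$ in every summand $A_g^i$ and both sides are $R$-linear, the two composite maps $A \to B$ coincide, proving $\varphi \circ \tau_{i,\chi} = \tau_{\psi(i),\chi} \circ \varphi$. There is no real obstacle here: the only thing to be careful about is invoking the morphism condition $\varphi(A_g^i) \subseteq B_g^{\psi(i)}$ so that $\varphi(a)$ lands in the correct graded component of the target, which is precisely where the compatibility of $\varphi$ with $\psi$ is used — without it the element $\varphi(a)$ would not have a well-defined $\Gamma$-degree and $\tau_{\psi(i),\chi}$ could not be applied componentwise. The fact that both $\tau_{i,\chi}$ and $\tau_{\psi(i),\chi}$ are genuine algebra automorphisms (noted in Definition~\ref{def:Miy}) is not even needed; only their description as scalar multiplication on homogeneous pieces matters.
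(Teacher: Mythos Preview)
Your proof is correct and follows exactly the same approach as the paper's: pick $a \in A_g^i$, compute both sides as $\chi(g)\,\varphi(a)$ using the definition of the Miyamoto map and the morphism condition $\varphi(A_g^i) \subseteq B_g^{\psi(i)}$, then conclude by linearity and the direct sum decomposition $A = \bigoplus_{g \in \Gamma} A_g^i$. The only difference is that you spell out the reasoning in more detail.
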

\begin{proof}
    Let $a \in A_g^i$ for some $g \in \Gamma$.
    Then on the one hand, $\varphi(\tau_{i,\chi}(a)) = \varphi(\chi(g)a) = \chi(g) \varphi(a)$, while on the other hand,
    $\varphi(a) \in B_g^{\psi(i)}$ and hence
    $\tau_{\psi(i),\chi}(\varphi(a)) = \chi(g) \varphi(a)$ as well.
    Since $A = \bigoplus_{g \in \Gamma} A_g^i$, the result follows.
\end{proof}
\begin{proposition} \label{prop:surj-functor}
    Let $\Y \leq \X_R(\Gamma)$.
    Let $(\varphi, \psi)$ be a morphism between two $\Gamma$\dash decomposition algebras $(A, \I, \Omega_A)$ and $(B, \J, \Omega_B)$.
    Assume that $\varphi$ is surjective.
    Then:
    \begin{enumerate}[\rm (i)]
        \item 
            There is a corresponding morphism $\theta \colon \Miy_\Y(A, \I, \Omega_A) \to \Miy_\Y(B, \J, \Omega_B)$
            mapping each generator $\tau_{i,\chi}$ of $\Miy_\Y(A, \I, \Omega_A)$ to the corresponding generator
            $\tau_{\psi(i),\chi}$ of $\Miy_\Y(B, \J, \Omega_B)$.
        \item 
            There is a corresponding morphism $\widehat\theta \colon \UMiy_\Y(A, \I, \Omega_A) \to \UMiy_\Y(B, \J, \Omega_B)$
            mapping each generator $t_{i,\chi}$ of $\UMiy_\Y(A, \I, \Omega_A)$ to the corresponding generator
            $t_{\psi(i),\chi}$ of $\UMiy_\Y(B, \J, \Omega_B)$.
    \end{enumerate}
\end{proposition}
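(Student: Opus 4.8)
The plan is to deduce both parts from Lemma~\ref{le:tau_i} together with the surjectivity of $\varphi$: first for the concrete Miyamoto groups, and then bootstrapping to the abstractly presented universal Miyamoto groups.

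For part~(i), I would define $\theta$ on the generating set by $\tau_{i,\chi}\mapsto\tau_{\psi(i),\chi}$ and check that it extends to a group homomorphism, i.e.\ that every relation among the $\tau_{i,\chi}$ holding in $\Aut(A)$ also holds between their images in $\Aut(B)$. So let $w=\tau_{i_1,\chi_1}^{\varepsilon_1}\cdots\tau_{i_n,\chi_n}^{\varepsilon_n}$ (with $\varepsilon_j\in\{\pm1\}$, $\chi_j\in\Y$) equal $\id_A$, and let $w'=\tau_{\psi(i_1),\chi_1}^{\varepsilon_1}\cdots\tau_{\psi(i_n),\chi_n}^{\varepsilon_n}$. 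Using Lemma~\ref{le:tau_i} repeatedly---together with the identity $\tau_{i,\chi}^{-1}=\tau_{i,\chi^{-1}}$, which makes the lemma applicable to the inverses as well---one gets $\varphi\circ w=w'\circ\varphi$. As $w=\id_A$, this says $\varphi=w'\circ\varphi$, and since $\varphi$ is surjective it follows that $w'=\id_B$. Hence $\theta$ is a well-defined homomorphism, landing in $\Miy_\Y(B,\J,\Omega_B)$ because each generator $\tau_{\psi(i),\chi}$ does.

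For part~(ii), I would assume (as is tacitly needed for $\UMiy_\Y$ to be defined at all) that both $(A,\I,\Omega_A)$ and $(B,\J,\Omega_B)$ are Miyamoto-closed with respect to $\Y$, and then apply the universal property of the presentation of $\UMiy_\Y(A,\I,\Omega_A)$: it is enough to show the assignment $t_{i,\chi}\mapsto t_{\psi(i),\chi}$ carries every defining relator to a relation valid in $\UMiy_\Y(B,\J,\Omega_B)$. The relations internal to a copy $\Y_i$ go to the corresponding relations internal to $\Y_{\psi(i)}$, so those are trivial. The essential relators are the conjugation relations $\conj{t_{i,\chi}}{t_{j,\chi'}}=t_{k,\chi'}$, imposed precisely when $(j,k)\in R_{\overline{t_{i,\chi}}}$, i.e.\ when $\conj{\tau_{i,\chi}}{\tau_{j,\chi''}}=\tau_{k,\chi''}$ in $\Aut(A)$ for \emph{every} $\chi''\in\Y$. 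Applying the homomorphism $\theta$ of part~(i) to this whole family of identities gives $\conj{\tau_{\psi(i),\chi}}{\tau_{\psi(j),\chi''}}=\tau_{\psi(k),\chi''}$ in $\Aut(B)$ for every $\chi''\in\Y$, which says exactly $(\psi(j),\psi(k))\in R_{\overline{t_{\psi(i),\chi}}}$. Hence $\conj{t_{\psi(i),\chi}}{t_{\psi(j),\chi'}}=t_{\psi(k),\chi'}$ is one of the defining relations of $\UMiy_\Y(B,\J,\Omega_B)$, and the universal property supplies the homomorphism $\widehat\theta$.

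The one genuinely delicate point---the step I would be most careful about---is the interplay between the ``globality'' built into the presentation of $\UMiy_\Y$ (a conjugation relation is imposed only when it holds uniformly over \emph{all} $\chi''\in\Y$) and the transfer along $(\varphi,\psi)$. The resolution is precisely that the map on $\tau$'s from part~(i) is an honest group homomorphism, so conjugating an entire ``$\Y$-family'' of Miyamoto maps transports verbatim---and part~(i) in turn used surjectivity of $\varphi$ in an essential way (without it a word trivial on $A$ could have image acting nontrivially on $B\setminus\varphi(A)$, so $\theta$ would not even be well defined). Everything else---verifying the behaviour on inverses, and noting $\overline{t_{i,\chi}}=\tau_{i,\chi}$ so the sets $R_{\overline a}$ are exactly what the computation yields---is routine bookkeeping.
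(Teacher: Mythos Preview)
Your proof is correct and follows essentially the same route as the paper: both parts rest on Lemma~\ref{le:tau_i} plus surjectivity of $\varphi$, first to transfer arbitrary relations among the $\tau_{i,\chi}$ (part~(i)), and then to show that membership in $R_{\tau_{i,\chi}}$ is preserved under $\psi$ so that each defining conjugation relator maps to a defining relator (part~(ii)). Your explicit bookkeeping with inverses and your remark on the Miyamoto-closedness hypothesis are minor elaborations, but the argument is the same.
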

\begin{proof}
    \begin{enumerate}[\rm (i)]
        \item
            It suffices to verify that if the $\tau_{i,\chi}$ satisfy some relation
            \[ \tau_{i_1, \chi_1} \dotsm \tau_{i_\ell, \chi_\ell} = 1 \]
            inside $\Aut(A)$, then also
            \[ \tau_{\psi(i_1), \chi_1} \dotsm \tau_{\psi(i_\ell), \chi_\ell} = 1 \]
            inside $\Aut(B)$.
            This follows immediately from Lemma~\ref{le:tau_i} and the fact that $\varphi$ is surjective.
        \item
            We have to show that each relator of $\UMiy_\Y(A, \I, \Omega_A)$ is killed by~$\widehat\theta$.
            Consider a relator
            \[ r = t_{k,\chi'}^{-1} \cdot \conj{t_{i,\chi}}{t_{j,\chi'}} \quad \text{with} \quad (j, k) \in R_{\tau_{i,\chi}} . \]
            Then by definition, we have
            $\tau_{k,\chi'} = \conj{\tau_{i,\chi}}{\tau_{j,\chi'}}$ in $\Miy_\Y(A, \I, \Omega_A)$.
            By Lemma~\ref{le:tau_i}, this implies that
            $\tau_{\psi(k),\chi'} \circ \varphi = \conj{\tau_{\psi(i),\chi}}{\tau_{\psi(j),\chi'}} \circ \varphi$.
            Since $\varphi$ is surjective, it follows that
            $\tau_{\psi(k),\chi'} = \conj{\tau_{\psi(i),\chi}}{\tau_{\psi(j),\chi'}}$ in $\Miy_\Y(B, \J, \Omega_B)$.
            Because this holds for all \mbox{$\chi' \in \Y$}, we have
            \[ (\psi(j), \psi(k)) \in R_{\tau_{\psi(i),\chi}} . \]
            Now $\widehat\theta$ maps the given relator $r$ to 
            $t_{\psi(k),\chi'}^{-1} \cdot 
            \conj{t_{\psi(i),\chi}}{t_{\psi(j),\chi'}}$, and by the definition of 
            $\UMiy_\Y(B, \J, \Omega_B)$, this element is trivial.
        \qedhere
    \end{enumerate}
\end{proof}
The requirement that $\varphi$ is surjective cannot be dropped in general, as 
the following generic type of example illustrates.
\begin{example}
    Let $\Gamma = \{ 1, \sigma \}$ be the group of order $2$ and let $\Y = \{ 1 , \chi \}$ as in Example~\ref{ex:Miy-DMVC} above.
    Since there is only one non-trivial character in $\Y$, we will omit it from our notation and, for example, write $\tau_i$ in place of $\tau_{i,\chi}$.
    Let $(A, I, \Omega)$ be a $\Gamma$\dash decomposition algebra.
    The only (very weak) assumption we make, is the existence of three different $j,k,\ell \in I$ such that there is a relation
    $\conj{\tau_k}{\tau_j} = \tau_\ell$.

    We will now construct another $\Gamma$-decomposition algebra $(B, J, \Omega')$ and a morphism $(\varphi, \psi) \colon (A, I, \Omega) \to (B, J, \Omega')$
    such that the map $t_i \mapsto t_{\psi(i)}$ does \emph{not} induce a group morphism between the corresponding universal Miyamoto groups.

    Let $B = A \oplus M$, where $M$ is a free $R$-module of rank $2$ with basis $\{ e, f \}$, and extend the multiplication of $A$ to $B$ trivially
    ($AM = MA = 0$).
    Let $\varphi \colon A \to B$ be the natural inclusion.
    Let $J = I \times \{ 1, 2 \}$; we will construct two decompositions of $B$ for each decomposition of $A$ in $\Omega$.
    Define
    \begin{align*}
        \Omega'[i,1] &:= (A^i_1 \oplus Re, A^i_\sigma \oplus Rf) \quad \text{and} \\
        \Omega'[i,2] &:= (A^i_1 \oplus Rf, A^i_\sigma \oplus Re) .
    \end{align*}
    If we arbitrarily choose $c_i \in \{ 1,2 \}$ for each $i \in I$, then the map $\psi \colon I \to J \colon i \mapsto (i, c_i)$
    will give rise to a morphism $(\varphi, \psi)$ of $\Gamma$-decomposition algebras.
    In particular, this holds if we choose $c_j = c_k = 1$ and $c_\ell = 2$.
    Now consider the corresponding Miyamoto involutions $\tau_{(j,1)}$, $\tau_{(k,1)}$ and $\tau_{(\ell, 2)}$ of $B$;
    then $\tau_{(j,1)}$ and $\tau_{(k,1)}$ fix the element $e$ whereas $\tau_{(\ell, 2)}$ maps $e$ to $-e$.
    In particular,
    \[ \conj{\tau_{\psi(k)}}{\tau_{\psi(j)}} = \conj{\tau_{(k,1)}}{\tau_{(j,1)}} \neq \tau_{(\ell,2)} = \tau_{\psi(\ell)} . \]
    Hence the map $t_{i} \mapsto t_{\psi_i}$ does not induce a group morphism $\UMiy_\chi(A, \I, \Omega) \to \UMiy_\chi(B, \J, \Omega')$.
\end{example}
This behavior is caused by the fact that we can distort the map $\psi$.
If we now restrict to \emph{axial} decomposition algebras (see section~\ref{se:axial}) that are sufficiently nice with respect to the Miyamoto maps,
then this type of distortion cannot occur, and $\UMiy_\Y$ becomes a functor.
\begin{definition}
    Let $(\Gamma, *)$ be a group fusion law, let $\Y \leq \X_R(\Gamma)$ be a subgroup of the $R$-character group
    and let $\Phi = (X, *)$ be a fusion law with a $\Gamma$\dash grading.
    Let $\lambda \colon X \to R$ be an evaluation map and let $(A, \I, \Omega, \alpha) \in \AXDEC$ be an axial decomposition algebra,
    with axes $a_i := \alpha(i)$ for each $i \in \I$. 
    By Proposition~\ref{pr:view}, we can also view this as a $\Gamma$\dash decomposition algebra (but usually \emph{not} as an axial $\Gamma$-decomposition algebra!).
    For each $i \in \I$ and each $\chi \in \Y$, let $\tau_{i,\chi}$ be the corresponding Miyamoto map.
    \begin{enumerate}[(i)]
        \item
            We call $(A, \I, \Omega, \alpha)$ \emph{Miyamoto-stable} with respect to $\Y$
            if for each $i \in \I$ and each $\chi \in \Y$, there is a permutation $\pi_{i,\chi}$ of $\I$ such that
            the pair $(\tau_{i,\chi}, \pi_{i,\chi})$ is an automorphism of $(A, \I, \Omega, \alpha)$ in $\AXDEC$.
            In other words, for each $i,j \in \I$:
            \begin{itemize}
                \item
                    the Miyamoto map $\tau_{i,\chi}$ permutes the axes; explicitly, $\tau_{i,\chi}(a_j) = a_{\pi_{i,\chi}(j)}$;
                \item
                    $\tau_{i,\chi}(A_x^j) = A_x^{\pi_{i,\chi}(j)}$ for each $x \in X$.
            \end{itemize}
            In particular, if $(A, \I, \Omega, \alpha)$ is Miyamoto-stable, then the $\Gamma$\dash decomposition algebra $(A, \I, \Omega)$ is Miyamoto-closed
            (see Definition~\ref{def:Miy}\eqref{Miy:closed}).
        \item
            We call $(A, \I, \Omega, \alpha)$ \emph{of unique type} with respect to $\Y$
            if both the map $\alpha \colon \I \to A$ and the map $\I \to \Hom(\Y, \Aut(A)) \colon i \mapsto (\chi \mapsto \tau_{i,\chi})$ are injective.
            In other words, for each $i \neq j$:
            \begin{itemize}
                \item $a_i \neq a_j$;
                \item there is at least one $\chi \in \Y$ such that $\tau_{i,\chi} \neq \tau_{j,\chi}$.
            \end{itemize}
            In particular, the assumption that $\alpha$ is injective implies that the permutations $\pi_{i,\chi}$ are now uniquely determined by $\tau_{i,\chi}$.
    \end{enumerate}
\end{definition}
\begin{theorem}\label{thm:axial-functor}
    Let $(\Gamma, *)$ be a group fusion law and let $\chi \colon \Gamma \to R^\times$ be a group homomorphism.
    Let $\Phi = (X, *)$ be a fusion law with a $\Gamma$-grading and let $\lambda \colon X \to R$ be an evaluation map.

    Let $\CAT$ be the full subcategory of $\AXDEC$ consisting of axial decomposition algebras that are Miyamoto-stable and of unique type with respect to $\Y$.
    Then $\UMiy_\Y \colon \CAT \to \GRP$ is a functor.
\end{theorem}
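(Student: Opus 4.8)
The plan is to verify the two functor axioms: that $\UMiy_\Y$ sends each object of $\CAT$ to a group (this is already guaranteed, since Miyamoto-stability implies Miyamoto-closedness, so the universal Miyamoto group is defined), and that it sends each morphism $(\varphi,\psi)\colon(A,\I,\Omega_A,\alpha)\to(B,\J,\Omega_B,\beta)$ in $\CAT$ to a group homomorphism $\widehat\theta\colon\UMiy_\Y(A,\I,\Omega_A)\to\UMiy_\Y(B,\J,\Omega_B)$ sending $t_{i,\chi}\mapsto t_{\psi(i),\chi}$, compatibly with composition and identities. Composition and identities are formal once the assignment on morphisms is shown to be well defined, so the heart of the matter is: \emph{every relator of $\UMiy_\Y(A,\I,\Omega_A)$ is killed by the candidate map}. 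Since Proposition~\ref{prop:surj-functor} already handles the surjective case, the new content is exactly in removing the surjectivity hypothesis by exploiting the extra structure of $\CAT$ (axes, Miyamoto-stability, unique type).

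First I would unwind what a morphism in $\CAT$ forces at the level of Miyamoto maps. By Lemma~\ref{le:tau_i}, $\varphi\circ\tau_{i,\chi}=\tau_{\psi(i),\chi}\circ\varphi$ for every $i\in\I$ and $\chi\in\Y$. The key extra fact is that the map $\psi$ is now \emph{pinned down}: since $(\varphi,\psi)$ is a morphism in $\AXDEC$ we have $\varphi(a_i)=b_{\psi(i)}$, and since $(B,\J,\Omega_B,\beta)$ is of unique type, $\beta$ is injective, so $\psi(i)$ is \emph{the unique} index $k\in\J$ with $b_k=\varphi(a_i)$. This is what rules out the distortion of $\psi$ seen in the example preceding the theorem. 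Next, I would show the crucial transfer of conjugation relations: suppose $(j,k)\in R_{\tau_{i,\chi}}$ in $A$, i.e. $\conj{\tau_{i,\chi}}{\tau_{j,\chi'}}=\tau_{k,\chi'}$ for all $\chi'\in\Y$. Applying $\varphi$ on the left and Lemma~\ref{le:tau_i} repeatedly gives $\conj{\tau_{\psi(i),\chi}}{\tau_{\psi(j),\chi'}}\circ\varphi=\tau_{\psi(k),\chi'}\circ\varphi$. Without surjectivity we cannot cancel $\varphi$ directly, so instead I would evaluate both sides on the axis $a_j$ (equivalently, compute the images of decompositions): Miyamoto-stability says $\tau_{j,\chi'}(a_j)=a_{\pi_{j,\chi'}(j)}$ and, more usefully, the Miyamoto maps permute the whole indexed family $\Omega_A$, so the relation $\conj{\tau_{i,\chi}}{\tau_{j,\chi'}}=\tau_{k,\chi'}$ combined with the permutation action forces $\pi_{i,\chi}\,\pi_{j,\chi'}\,\pi_{i,\chi}^{-1}=\pi_{k,\chi'}$ as permutations of $\I$ (using that $\alpha$ is injective, so $\pi$ is determined by $\tau$). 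Transporting along $\psi$ via $\varphi(a_j)=b_{\psi(j)}$ and the stability of $(B,\J,\Omega_B,\beta)$, I get $\conj{\tau_{\psi(i),\chi}}{\tau_{\psi(j),\chi'}}$ and $\tau_{\psi(k),\chi'}$ agreeing on the image $\varphi(A)\oplus(\text{nothing extra needed})$; but actually the cleaner route is: both maps agree on $\varphi(A)$ by the displayed identity, and they are \emph{equal as automorphisms of $B$ already}, because — here is the point — an element of $\Miy_\Y(B)$ is determined by its action on $\Omega_B$, and the action on $\Omega_B$ of $\conj{\tau_{\psi(i),\chi}}{\tau_{\psi(j),\chi'}}$ and of $\tau_{\psi(k),\chi'}$ is computed purely combinatorially from $\pi$-permutations on $\J$ which match by the transported relation. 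Hence $(\psi(j),\psi(k))\in R_{\tau_{\psi(i),\chi}}$ in $B$, and the relator $t_{\psi(k),\chi'}^{-1}\cdot\conj{t_{\psi(i),\chi}}{t_{\psi(j),\chi'}}$ is trivial in $\UMiy_\Y(B,\J,\Omega_B)$ by its defining presentation. This shows $\widehat\theta$ is well defined; functoriality (respecting composition and identities) is then immediate from the formula $t_{i,\chi}\mapsto t_{\psi(i),\chi}$ together with the fact that composites of morphisms in $\CAT$ have $\psi$-components that compose as set maps.

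The main obstacle I anticipate is exactly the step where, without surjectivity of $\varphi$, one must promote the identity ``$\conj{\tau_{\psi(i),\chi}}{\tau_{\psi(j),\chi'}}$ and $\tau_{\psi(k),\chi'}$ agree on $\varphi(A)$'' to ``they are equal in $\Aut(B)$'', or rather to ``$(\psi(j),\psi(k))\in R_{\tau_{\psi(i),\chi}}$''. The example preceding the theorem shows this is genuinely false for arbitrary decomposition algebras, so the argument \emph{must} use the $\CAT$-hypotheses in an essential way — and the delicate point is that one should not try to prove the two automorphisms of $B$ are literally equal (they need not be, since $B$ can have decompositions far from $\varphi(A)$), but only that they induce the same permutation of the \emph{indexed family} $\Omega_B$ in the relevant sense, which is what $R_{\tau_{\psi(i),\chi}}$ actually records. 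Getting the bookkeeping of the permutations $\pi_{i,\chi}$ versus $\pi_{\psi(i),\chi}$ right — and invoking ``of unique type'' precisely where it is needed to make these permutations well defined and to make $\psi$ canonical — is where the care lies; everything else is a routine check against the presentation of the universal Miyamoto group.
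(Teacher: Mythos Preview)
Your overall strategy is right: reduce to showing that $(j,k)\in R_{\tau_{i,\chi}}$ implies $(\psi(j),\psi(k))\in R_{\tau_{\psi(i),\chi}}$, and exploit Miyamoto-stability plus unique type to do this. The computation you sketch on axes --- $\varphi(a_{\pi_{i,\chi}(j)})=\tau_{\psi(i),\chi}(b_{\psi(j)})=b_{\pi_{\psi(i),\chi}(\psi(j))}$, hence by injectivity of $\beta$
\[
    \psi\bigl(\pi_{i,\chi}(j)\bigr)=\pi_{\psi(i),\chi}\bigl(\psi(j)\bigr),
\]
is exactly the intertwining relation the paper establishes, and you have it. But your final step contains a genuine gap.

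You try to conclude $\conj{\tau_{\psi(i),\chi}}{\tau_{\psi(j),\chi'}}=\tau_{\psi(k),\chi'}$ in $\Aut(B)$ by arguing that ``an element of $\Miy_\Y(B)$ is determined by its action on $\Omega_B$'' and that the induced permutations of $\J$ match. Neither half is justified: the map $\Miy_\Y(B)\to\Sym(\J)$ need not be injective, and the relation $\conj{\pi_{i,\chi}}{\pi_{j,\chi'}}=\pi_{k,\chi'}$ you derived lives on $\I$, so transporting along $\psi$ only constrains the permutations on $\psi(\I)\subseteq\J$, not on all of $\J$. The paper avoids this entirely by a sharper use of the hypotheses. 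From $(j,k)\in R_{\tau_{i,\chi}}$ and Lemma~\ref{le:umiy}\eqref{umiy:id} one has $\tau_{k,\chi'}=\tau_{\pi_{i,\chi}(j),\chi'}$ for \emph{all} $\chi'\in\Y$; now the \emph{second} clause of ``unique type'' for $A$ (injectivity of $i\mapsto(\chi\mapsto\tau_{i,\chi})$) forces the equality of \emph{indices} $k=\pi_{i,\chi}(j)$. Combined with the intertwining relation above, $\psi(k)=\pi_{\psi(i),\chi}(\psi(j))$. Now Miyamoto-stability of $B$ (again via Lemma~\ref{le:umiy}\eqref{umiy:id}) gives, as an honest identity in $\Aut(B)$,
\[
    \conj{\tau_{\psi(i),\chi}}{\tau_{\psi(j),\chi'}}=\tau_{\pi_{\psi(i),\chi}(\psi(j)),\chi'}=\tau_{\psi(k),\chi'},
\]
with no need to compare actions on $\Omega_B$ or on $\varphi(A)$. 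The missing idea in your argument is precisely this: unique type is applied in $A$ to pin down $k$ as an \emph{index} (not merely a permutation relation), after which the desired equality in $\Aut(B)$ is automatic from Miyamoto-closedness of $B$.
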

\begin{proof}
    Let $(A, \I, \Omega, \alpha) \xlongrightarrow{(\varphi, \psi)} (B, \J, \Omega', \beta)$ in $\CAT$.
    Notice that $(\varphi, \psi)$ is also a morphism in $\GDEC$.
    By Lemma~\ref{le:tau_i}, $\varphi \circ \tau_{i,\chi} = \tau_{\psi(i),\chi} \circ \varphi$ for all $i \in \I$ and all $\chi \in \Y$.
    For each $i \in \I$ and each $j \in \J$, we write $a_i := \alpha(i)$ and $b_j := \beta(j)$.
    Then for all $i,j \in \I$ and all $\chi \in \Y$, we have
    \begin{multline*}
        b_{\psi(\pi_{i,\chi}(j))} = \varphi(a_{\pi_{i,\chi}(j)}) = \varphi(\tau_{i,\chi}(a_j)) = \tau_{\psi(i),\chi}(\varphi(a_j)) \\
            = \tau_{\psi(i),\chi}(b_{\psi(j)}) = b_{\pi_{\psi(i),\chi}(\psi(j))} ,
    \end{multline*}
    and because $\beta$ is assumed to be injective, we get
    \begin{equation}\label{eq:psipi}
        \psi(\pi_{i,\chi}(j)) = \pi_{\psi(i),\chi}(\psi(j)) .
    \end{equation}
    We will show that the map $t_{i,\chi} \mapsto t_{\psi(i),\chi}$ induces a group morphism $\UMiy_\Y(A, \I, \Omega) \to \UMiy_\Y(B, \J, \Omega')$
    by showing that if $(j, k) \in R_{\tau_{i,\chi}}$, then also $(\psi(j), \psi(k)) \in R_{\tau_{\psi(i), \chi}}$.
    So let $(j, k) \in R_{\tau_{i,\chi}}$; then by Lemma~\ref{le:umiy}\eqref{umiy:id},
    \[ \tau_{k,\chi'} = \conj{\tau_{i,\chi}}{\tau_{j,\chi'}} = \tau_{\pi_{i,\chi}(j),\chi'} \]
    for all $\chi' \in \Y$.
    Because $(A, \I, \Omega, \alpha)$ is of unique type with respect to $\Y$, this can only happen if $k = \pi_{i,\chi}(j)$.
    Hence, by~\eqref{eq:psipi} and by Lemma~\ref{le:umiy}\eqref{umiy:id} again, also
    \[ \tau_{\psi(k),\chi'} = \tau_{\psi(\pi_{i,\chi}(j)),\chi'} = \tau_{\pi_{\psi(i),\chi}(\psi(j)),\chi'} = \conj{\tau_{\psi(i),\chi}}{\tau_{\psi(j),\chi'}} \]
    for all $\chi' \in \Y$.
    We conclude that indeed $(\psi(j), \psi(k)) \in R_{\tau_{\psi(i), \chi}}$.
\end{proof}

\begin{example}\label{ex:Miy-not-functorial}
    The previous theorem is false for the ordinary Miyamoto group $\Miy_\mathcal{Y}$, as we now illustrate.
    Let $n \geq 3$ be odd and consider the matrix algebra $M_n(k)$ of all $n\times n$-matrices over a field~$k$ with $\Char(k) \neq 2$. Let $J_n := M_n(k)^+$ be the corresponding Jordan algebra; this is the commutative non-associative algebra with multiplication $A \bullet B := \tfrac{1}{2}(AB + BA)$.
    
    Let $E_n$ be the set of all primitive idempotents of $J_n$. These are the matrices that are diagonalizable with eigenvalues $1$ with multiplicity $1$ and $0$ with multiplicity~$n-1$. It is well known that each idempotent $e$ in a Jordan algebra $J$ gives rise to a decomposition of $J$ into \textit{Peirce subspaces}, the eigenspaces of $\ad_e$ with eigenvalues $0$, $\tfrac{1}{2}$ and $1$, and moreover, this decomposition satisfies the Jordan fusion law from Example~\ref{ex:jordan} (for example see \cite{Jacobson}*{p.\@~119, Lemma 1}). 
    In the case of $J_n$ and $e \in E_n$, these eigenspaces have dimension $(n-1)^2$, $2(n-1)$ and $1$, respectively.
    This gives $J_n$ the structure of a primitive axial decomposition algebra $(J_n, E_n, \Omega, \id)$ admitting a $\Z/2\Z$-grading; it is clearly of unique type.
    
    For each $e \in E_n$, the corresponding Miyamoto map $\tau_e$ is precisely the conjugation action of $2e-1$ on $J_n$; since $n$ is odd, $2e-1 \in \SL_n(k)$. Hence the Miyamoto group $G = \Miy(J_n, E_n, \Omega)$ is isomorphic to the group generated by the elements $[2e-1] \in \PSL_n(k) \leq \Aut(J_n)$ for $e \in E_n$. Since $G$ is a non-trivial normal subgroup of $\PSL_n(k)$, it is isomorphic to $\PSL_n(k)$ itself.
    
    Now consider the algebra morphism
    \[ \varphi \colon J_n \to J_{n+2} \colon A \mapsto \begin{psmallmatrix} A & 0 \\ 0 & 0 \end{psmallmatrix} \]
    and the map $\psi \colon E_n \to E_{n+2}$ given by restriction of $\varphi$ to $E_n$.
    Then the pair $(\varphi, \psi)$ is a morphism of axial decomposition algebras.
    However, the map $\tau_e \mapsto \tau_{\psi(e)}$ does not extend to a group homomorphism from $\PSL_n(k)$ to $\PSL_{n+2}(k)$:
    the product of the Miyamoto maps corresponding to the primitive idempotents $E_{11},\dots,E_{nn}$ (where $E_{ij}$ is the matrix that is zero everywhere except at position $(i,j)$ where it has entry $1$) is trivial in $\PSL_n(k)$, but the product of their images under $\psi$ is equal to the element $\lbrack \operatorname{diag}(1, 1, \dots, 1, -1, -1) \rbrack \in \PSL_{n+2}(k)$.
   
    Notice that, in contrast, the \textit{universal} Miyamoto group always has a quotient isomorphic to $\SL_n(k)$.
    (Determining the precise structure of the universal Miyamoto group seems to be a challenging problem.)
\end{example}

\section{Decomposition algebras from representations}\label{se:rep}

In this section we will see how representation theory directly gives rise to interesting decomposition algebras.
We will assume that our base ring is the field $\C$ of complex numbers.

So let $A$ be any finite-dimensional $\C$-algebra.
Let $H$ be any finite subgroup of the automorphism group of $A$ and let $\Irr(H)$ be its representation fusion law
as in Example~\ref{ex:reprfus}.
If $A$ is semisimple as a $\C H$-module, then its unique decomposition into $H$\dash isotypic components will be an $\Irr(H)$-decomposition of $A$.

\begin{definition}[{See \cite{Ser77}}]
    Let $H$ be a finite group and let $A$ be a semisimple $\C H$-module.
	Let $V_1 \oplus \dots \oplus V_n$ be a decomposition of $A$ into irreducible modules.
	Denote the irreducible character of $H$ corresponding to $V_i$ by $\chi_i$.
	For each $\chi \in \Irr(H)$, the submodule
	\[
		A_\chi \coloneqq \bigoplus_{\chi_i = \chi} V_i
	\]
	is called the \textit{isotypic component} of $A$ corresponding to $\chi$.
	The decomposition
	\[ A = \bigoplus_{\chi \in \Irr(H)} A_\chi \]
	is called the \textit{$H$-isotypic decomposition} of $A$; it is uniquely determined by $A$ and $H$.
	The module $A$ is called \textit{multiplicity-free} if each isotypic component is irreducible; that is
	if $\chi_i \neq \chi_j$ for all $i \neq j$.
\end{definition}

\begin{theorem} \label{thm:constructdecomp}
	Let $A$ be a $\C$-algebra.
	Let $H$ be any finite subgroup of the automorphism group of $A$ and let $(\Irr(H),\ast)$ be its representation fusion law.
	Let $\{ H_i \mid i \in \I\}$ be a set of (some or all) conjugates of $H$ in $\Aut(A)$ indexed by some set $\I$.
	Then:
    \begin{enumerate}[\rm (i)]
        \item
        	The $H$-isotypic decomposition $A = \bigoplus_{\chi \in \Irr(H)} A_\chi$ of $A$ is an $(Irr(H),\ast)$\dash decomposition.
        \item        
        	If $A$ is multiplicity-free (as a $\C H$-module),
        	then any non-zero element $a \in A_1$ is an axis for this decomposition.
        \item
            For each $i \in \I$, let $A = \bigoplus_{\chi \in \Irr(H)} A^i_\chi$ be the $H_i$-isotypic decomposition of~$A$.
            Let $\Omega = \bigl( ( A_\chi^i )_{\chi \in \Irr(H)} \mid i \in \I \bigr)$.
            Then $(A, \I, \Omega)$ is an $(\Irr(H),\ast)$\dash decomposition algebra.
        \item
        	If $A$ is multiplicity-free (as a $\C H$-module) and
        	for each $i \in \I$, $a_i$ is a non-zero element of $A_1^i$.
        	Then $(A, \I, \Omega, \alpha)$ is an axial decomposition algebra, 
          where $\alpha \colon \I \to A \colon i \mapsto a_i$.
    \end{enumerate}
\end{theorem}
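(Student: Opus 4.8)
The plan is to establish the four assertions in order, since each one builds on the previous: (i) produces the decomposition, (ii) identifies the axis for a single fixed $H$, (iii) assembles the tuple $\Omega$ from all conjugates $H_i$, and (iv) combines (ii) and (iii) to produce the axial structure. For (i), the only thing to check is the fusion rule $A_\chi A_\psi \subseteq A_{\chi * \psi}$. Since each element of $H$ is an algebra automorphism of $A$, the multiplication map $A \otimes_{\C} A \to A$ is a morphism of $\C H$-modules; restricting to the isotypic components $A_\chi \otimes A_\psi$ and using that the image of the tensor product of the $\chi$-isotypic and $\psi$-isotypic parts lands in the sum of the isotypic components of the constituents of $\chi \otimes \psi$, which is exactly $A_{\chi * \psi}$ by the definition of the representation fusion law in Example~\ref{ex:reprfus}. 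This is the conceptual heart of the whole theorem and I expect it to be the main (though not difficult) point.

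For (ii), assume $A$ is multiplicity-free and fix a non-zero $a \in A_1$. Since $A_1$ is the trivial isotypic component and (in the multiplicity-free case) one-dimensional, I want to show that left multiplication by $a$ acts as a scalar on each $A_\chi$. Here one uses that $\ad_a \colon A \to A$ commutes with the $H$-action: for $h \in H$ and $b \in A$, $h(ab) = h(a)h(b) = a\, h(b)$ because $a \in A_1$ is $H$-fixed. Therefore $\ad_a$ is an endomorphism of $A$ as a $\C H$-module, and by Schur's lemma (using multiplicity-freeness, so $\Hom_{\C H}(A_\chi, A_\chi) = \C$ and $\Hom_{\C H}(A_\chi, A_\psi) = 0$ for $\chi \neq \psi$) it preserves each isotypic component and acts on $A_\chi$ by a scalar $\lambda_\chi \in \C$. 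That is precisely condition~\eqref{eq:left-axial}, hence $a$ is an axis with parameters $\lambda_\chi$ (the $\lambda_\chi$ are then forced, and in particular $\lambda_1$ is determined by $a^2 = \lambda_1 a$).

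For (iii), each $H_i = {}^{g_i}H$ is again a finite subgroup of $\Aut(A)$ and $A$ is still semisimple as a $\C H_i$-module (conjugation by $g_i$ is an equivalence of categories of modules), so by (i) its $H_i$-isotypic decomposition $A = \bigoplus_{\chi} A_\chi^i$ is an $(\Irr(H),*)$-decomposition — note we index all of them by the same set $\Irr(H) \cong \Irr(H_i)$ via the canonical isomorphism induced by $g_i$. Hence $\Omega = \bigl((A_\chi^i)_{\chi \in \Irr(H)} \mid i \in \I\bigr)$ is a tuple of $(\Irr(H),*)$-decompositions, which is exactly the data of a decomposition algebra $(A, \I, \Omega)$. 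Finally, for (iv), multiplicity-freeness of $A$ as a $\C H$-module transfers to each $\C H_i$-module structure by the same conjugation argument, so (ii) applies to each $H_i$: any non-zero $a_i \in A_1^i$ is an axis for the $i$-th decomposition with the scalars $\lambda_\chi$ (these are the \emph{same} scalars for every $i$, since conjugation by $g_i$ carries $a$, $\ad_a$ and the isotypic decomposition for $H$ to the corresponding data for $H_i$, so the eigenvalue on $A_\chi^i$ equals that on $A_\chi$). Setting $\alpha \colon \I \to A \colon i \mapsto a_i$, every defining condition of an object of $\AXDEC$ is met, so $(A, \I, \Omega, \alpha)$ is an axial decomposition algebra, completing the proof.
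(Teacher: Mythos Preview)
Your argument is correct and matches the paper's approach: both (i) and (ii) rest on the $H$-equivariance of the multiplication map together with Schur's lemma, and the paper dispatches (iii) and (iv) in one line each (``This follows from (i)'' and ``This follows from (ii)''), which is exactly what you do with more detail.

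One small caveat on (iv): your parenthetical claim that the eigenvalues $\lambda_\chi$ are \emph{the same} for every $i$ is only justified by the conjugation argument when $a_i$ is taken to be $g_i(a)$ (or a fixed scalar multiple, uniform in $i$). For an arbitrary nonzero $a_i \in A_1^i$, writing $a_i = c_i\, g_i(a)$ one finds that $\ad_{a_i}$ acts on $A_\chi^i$ by $c_i \lambda_\chi$, so a single evaluation map $\lambda$ valid for all $i$ need not exist unless the $a_i$ are chosen coherently. The paper's proof does not address this point either.
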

\begin{proof}
    \begin{enumerate}[\rm (i)]
        \item
        	Let $V_1 \oplus \dots \oplus V_n$ be a decomposition of $A$ into irreducibles.
          By Schur's lemma \cite{Ser77}*{\S 2.2}, $\Hom(V_i \otimes V_j , V_k) = 0$ whenever $\chi_k$ is not a constituent of $\chi_i \otimes \chi_j$. Hence the projection of $V_i \cdot V_j$ onto $V_k$ is zero.
        \item
        	Note that the requirement that $A$ is multiplicity-free implies that each $A_\chi$ is a simple $\C H$-module.
        	The fusion law implies that $A_1 A_\chi \subseteq A_\chi$ for all $\chi \in \Irr(H)$.
          Thus for any non-zero $a \in A_1$, we have 
          $(\ad_a)_{\textstyle{}_{A_\chi}} \in 
          \Hom(A_\chi,A_\chi)$.  Schur's lemma now implies that 
          $\Hom(A_\chi,A_\chi) \cong \C$ and hence $a$ is an axis for this 
          decomposition.
        \item
            This follows from (i).
        \item
            This follows from (ii).
        \qedhere
    \end{enumerate}
\end{proof}

\begin{example}
	A typical choice for $H$ is the centralizer $C_G(g)$ of an automorphism $g \in G := \Aut(A)$ of finite order $n$.
    Notice that by Proposition~\ref{pr:rep}, this implies that the fusion law $(\Irr(H), \ast)$ is $\Z/n\Z$-graded.
    Moreover, by restricting the characters of $H$ to $\langle g \rangle \leq Z(H)$, we see that the Miyamoto maps corresponding to
    the decomposition $A = \bigoplus_{\chi \in \Irr(H)} A_\chi$ with respect to this $\Z/n\Z$-grading
    are precisely the elements of $\langle g \rangle \leq \Aut(A)$.
    
	For example, if $A$ is the Griess algebra, we can recover its structure as an axial algebra (with $\Z/2\Z$-grading)
	by taking $H$ equal to the centralizer of a $2A$-involution and the Miyamoto group of this axial algebra is precisely
	the group generated by all those $2A$-involutions, i.e., the Monster group.
\end{example}

Conversely, we can use this technique to refine the fusion law of a decomposition.

\begin{proposition} \label{prop:fusionlawfromdecomp}
	Let $A = \bigoplus_{x \in X} A_x$ be a decomposition of a $\C$-algebra $A$.
	Let $H \leq \Aut(A)$ be a finite subgroup such that each $A_x$ is $H$-invariant.
	For each $x \in X$, let $\chi_x$ be the character of the $\C H$-module $A_x$. Consider the map
	\[
		 \ast \colon X \times X \to 2^X \colon (x,y) \mapsto \{z \in X \mid \langle \chi_z , \chi_x\chi_y \rangle \neq 0 \}
	\]
	where $\langle \; , \; \rangle$ is the inner product on the space of class functions of $H$.
	Then $A = \bigoplus_{x \in X} A_x$ is an $(X,\ast)$-decomposition of $A$.
\end{proposition}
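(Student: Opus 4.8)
The plan is to run the same Schur's-lemma argument that proves Theorem~\ref{thm:constructdecomp}(i), but with Schur's lemma replaced by the dimension formula for $\Hom$-spaces between representations of the finite group $H$. Concretely, one only needs to show $A_x A_y \subseteq A_{x*y} = \bigoplus_{z \in x*y} A_z$ for all $x,y \in X$, since that is exactly the defining condition for $A = \bigoplus_{x \in X} A_x$ to be an $(X,\ast)$-decomposition.

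First I would observe that, because $H$ acts on $A$ by algebra automorphisms, the multiplication map $\mu \colon A \otimes_\C A \to A$ is a morphism of $\C H$-modules (with $H$ acting diagonally on the source). Since $A \otimes_\C A = \bigoplus_{x,y} A_x \otimes_\C A_y$ and each $A_x$, $A_y$ is $H$-invariant, the restriction $\mu_{x,y} \colon A_x \otimes_\C A_y \to A$ is again $H$-equivariant, and its image is $A_x A_y$. Next, since each $A_z$ is $H$-invariant and $A = \bigoplus_{z \in X} A_z$, the associated coordinate projection $\pi_z \colon A \to A_z$ is also a morphism of $\C H$-modules. Hence, for each $z \in X$, the composite $\pi_z \circ \mu_{x,y}$ lies in $\Hom_H(A_x \otimes_\C A_y, A_z)$.

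Now I would invoke the standard fact that for finite-dimensional complex representations $V, W$ of the finite group $H$ one has $\dim_\C \Hom_H(V,W) = \langle \chi_V, \chi_W \rangle$, together with the fact that $A_x \otimes_\C A_y$ affords the character $\chi_x \chi_y$. Taking $V = A_x \otimes_\C A_y$ and $W = A_z$ gives $\dim_\C \Hom_H(A_x \otimes_\C A_y, A_z) = \langle \chi_x \chi_y, \chi_z \rangle = \langle \chi_z, \chi_x \chi_y \rangle$, the last equality holding because both sides are nonnegative integers. Therefore, if $z \notin x \ast y$, i.e.\ $\langle \chi_z, \chi_x \chi_y \rangle = 0$, then $\pi_z \circ \mu_{x,y} = 0$, so $\pi_z(A_x A_y) = 0$. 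Since the $\pi_z$ are precisely the projections for the direct sum $A = \bigoplus_{z \in X} A_z$, vanishing of $\pi_z(A_x A_y)$ for every $z \notin x\ast y$ forces $A_x A_y \subseteq \bigoplus_{z \in x \ast y} A_z = A_{x \ast y}$, as required.

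I do not expect a genuine obstacle: the whole content sits in the $H$-equivariance of $\mu$ and of the projections $\pi_z$, plus the character-theoretic dimension count. The only points that warrant a line of care are the remark that the projections $\pi_z$ (and not merely the subspaces $A_z$) are $H$-equivariant, and the harmless bookkeeping with the complex conjugation in the inner product $\langle\,,\,\rangle$ on class functions, which does not affect the vanishing conclusion since the relevant multiplicities are real.
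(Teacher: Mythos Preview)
Your proposal is correct and is precisely the argument the paper uses, just written out in full detail: the paper's one-line proof simply states that $\Hom_{\C H}(A_x \otimes A_y, A_z) = 0$ whenever $\langle \chi_z, \chi_x\chi_y \rangle = 0$, which is exactly the content of your $H$-equivariance of $\mu$ and $\pi_z$ together with the dimension formula.
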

\begin{proof}
	This follows immediately from the fact that $\Hom_{\C H}(A_x \otimes A_y , A_z) = 0$ whenever $\langle \chi_z, \chi_x\chi_y \rangle = 0$.
\end{proof}

\begin{remark}
    Although we formulated the results in this section for a finite group~$H$, they can easily be generalized to Lie groups or linear algebraic groups.
	The proof only requires a suitable version of semi-simplicity and Schur's lemma.
\end{remark}

\section{Norton algebras} \label{se:Nor}

If $G$ is the Miyamoto group of a Miyamoto-stable axial decomposition algebra, then $G$ has a natural permutation action on the set of axes. We give a reverse construction. Starting from a transitive permutation representation of a group~$G$, we construct a Miyamoto-stable axial decomposition algebra on which $G$ acts by automorphisms. More precisely, we will prove that Norton algebras are axial decomposition algebras. Norton algebras, in the sense of this section, were first introduced in \cite{CGS} starting from association schemes. We refer to \cite{BI84} for more information about association schemes and Norton algebras.

\begin{definition}
	Let $X$ be a finite set and let $R_i \subseteq X \times X$ for $i = 0,\dots,d$. Assume:
		\begin{enumerate}[(I)]
      \item $X \times X = R_0 \cup \dots \cup R_d$ and $R_i \cap R_j = 
        \emptyset$ for all $i \neq j$; that is, the sets $R_i$ form a partition of 
        $X \times X$;
			\item $R_0 = \{(x,x) \mid x \in X\}$;
      \item for each $i$, $\prescript{t}{}{\!R_i} \coloneqq \{(x,y) \mid (y,x) \in R_i\} = R_{i'}$ for some $i'$;
			\item for any $(x,y) \in R_k$, the number of $z \in X$ for which $(x,z) \in R_i$ and $(z,y) \in R_j$ is a constant $p_{ij}^k$ only depending on $i,j,k$;
			\item $p_{ij}^k = p_{ji}^k$ for all $i,j,k$.
		\end{enumerate}
	Then $(X,\{R_i\}_{0\leq i \leq d})$ is called a (commutative) \emph{association scheme}.
  If $\prescript{t}{}{\!R_i} = R_i$ for all $i$, then we call the association scheme \emph{symmetric}.
\end{definition}

\begin{example}[\cite{BI84}*{\S II.2, Example 2.1}]\label{ex:assocfromgroup}
  Let $G$ be a transitive permutation group acting on a finite set $\Omega$. 
Denote the orbits of $G$ on $\Omega \times \Omega$ by 
$\Lambda_0,\dots,\Lambda_d$ where $\Lambda_0 = \{(x,x) \mid x \in \Omega\}$. 
Then $(\Omega,\{\Lambda_i\}_{0 \leq i \leq d})$ satisfies (I)--(IV). Requirement 
(V) is satisfied if and only if the corresponding permutation character is 
multiplicity free. This association scheme is 
symmetric if and only if for any $i$ and for any $x,y \in \Lambda_i$  there 
exists a $g \in G$ such that $\conj{g}{x} = y$ and $\conj{g}{y} = x$. If this condition is 
satisfied, we say that $G$ 
acts generously transitively on $\Omega$.
\end{example}

\begin{definition} \label{def:Nortonalgebras}
	Let $\mathcal{X} = (X,\{R_i\}_{0 \leq i \leq d})$ be an association scheme.
	\begin{enumerate}[(i)]
		\item For each $i$, let $A_i$ be the matrix whose rows and columns are indexed by the set $X$ and such that
		\[
			(A_i)_{xy} = \begin{cases}
				0 &\text{if $(x,y) \notin R_i$,} \\
				1 &\text{if $(x,y) \in R_i$.}
			\end{cases}
		\]
		Then $A_0 = I$ and $A_iA_j = \sum_{k=0}^d p_{ij}^kA_k$ for all $i,j$. Hence, by (V), they span a commutative subalgebra of the full matrix algebra. This algebra is called the \textit{Bose--Mesner algebra} or the \textit{adjacency algebra}. This algebra is also closed under the entry-wise or \textit{Hadamard matrix product} which we denote by $\circ$: $(A \circ B)_{ij} = (A_{ij}B_{ij})$.
		\item Let $V$ be the Hermitian space with orthonormal basis $\{e_x \mid x \in X \}$ indexed by the set $X$. Then the $A_i$ act naturally on $V$ and because they pairwise commute, they can be diagonalized simultaneously by a unitary matrix $U$. Let $V = V_0 \oplus V_1 \oplus \dots \oplus V_r$ be the decomposition of $V$ into common eigenspaces. It is readily verified that we can pick $V_0 = \langle (1,\dots,1) \rangle$. Denote the matrix form, with respect to the basis $\{e_x \mid x \in X\}$, of the projection $\pi_i$ of $V$ onto $V_i$ by $E_i$. Then $r=d$ and $E_0,\dots,E_d$ form a basis of primitive idempotents for the adjacency algebra of $\mathcal{X}$ \cite{BI84}*{\S 2.3, Theorem 3.1}. Since the adjacency algebra is closed under the Hadamard product, there exist constants $q_{ij}^k$ such that $E_i \circ E_j = \frac{1}{\left| X \right|}\sum_{k=0}^d q_{ij}^k E_k$. We call $q_{ij}^k$ the \emph{Krein parameters} of $\mathcal{X}$.
		\item For each $i,j$ and $k$ we can define a bilinear map $\sigma_{ij}^k: V_i \times V_j \to V_k$ as point-wise multiplication with respect to the basis $\{e_x \mid x \in X\}$ composed with projection onto $V_k$. That is,
		\[
			\sigma_{ij}^k(v,w) := \sum_{x \in X} \langle v , e_x \rangle \langle w , e_x \rangle \pi_k(e_x).
		\]
		In particular, $\sigma_{ii}^i$ gives $V_i$ the structure of a commutative non-associative algebra, which is called a \textit{Norton algebra}. We denote this product on $V_i$ by~$\star$.
	\end{enumerate}
\end{definition}

\begin{remark}
	If $\mathcal{X}$ is a symmetric association scheme then all the matrices $A_i$ will be symmetric and hence simultaneously diagonalizable by a real orthogonal matrix. In that case the matrices $E_i$ will be symmetric real matrices and the Norton algebras can be defined over $\mathbb{R}$.
\end{remark}

\begin{proposition}[\cite{BI84}*{\S II.8, Proposition 8.3}]
	We have
	\begin{enumerate}[\rm (i)]
		\item $\sigma_{ij}^k = 0$ if and only if $q_{ij}^k = 0$;
		\item $\sigma_{ij}^k(\pi_i(e_x),\pi_j(e_x)) = \frac{1}{\left| X \right|} q_{ij}^k \pi_k(e_x)$.
	\end{enumerate}
\end{proposition}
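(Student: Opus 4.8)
The plan is to prove (ii) by a direct matrix computation and then read off (i), one direction being immediate and the other needing a small extra ingredient.

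For (ii): I would use that $E_k$ is the matrix of $\pi_k$ with respect to the basis $\{e_x\}$, so that $\pi_k(e_a)$ is the $a$-th column of $E_k$ and $\langle\pi_i(e_a),e_x\rangle = (E_i)_{xa}$. Feeding $v=\pi_i(e_a)$, $w=\pi_j(e_a)$ into the definition of $\sigma_{ij}^k$ turns the sum into $\sum_{x}(E_i)_{xa}(E_j)_{xa}\,\pi_k(e_x) = \sum_x(E_i\circ E_j)_{xa}\,\pi_k(e_x)$. Substituting $E_i\circ E_j = \tfrac1{|X|}\sum_l q_{ij}^lE_l$ and using orthogonality of the common eigenspaces in the form $\sum_x(E_l)_{xa}\pi_k(e_x) = \pi_k\bigl(\pi_l(e_a)\bigr) = \delta_{kl}\,\pi_k(e_a)$, the sum collapses to $\tfrac1{|X|}q_{ij}^k\pi_k(e_a)$, which is (ii).

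For (i): the easy direction is $\sigma_{ij}^k=0\Rightarrow q_{ij}^k=0$ — just evaluate (ii) at any $a$ for which $\pi_k(e_a)\neq 0$, which exists since $E_k\neq 0$. For the converse I would rewrite $\sigma_{ij}^k(v,w)=\pi_k(v\circ w)$, where $v\circ w$ is the coordinatewise product, so that the claim becomes $\pi_k(V_i\circ V_j)=0$ with $V_i\circ V_j := \operatorname{span}\{v\circ w\mid v\in V_i,\ w\in V_j\}$. Since $\{\pi_i(e_x)\}_x$ and $\{\pi_j(e_y)\}_y$ span $V_i$ and $V_j$, this space is spanned by the products $\pi_i(e_x)\circ\pi_j(e_y)$; the \emph{diagonal} ones ($x=y$) are precisely the columns of $E_i\circ E_j = \tfrac1{|X|}\sum_l q_{ij}^lE_l$, hence they span $\bigoplus_{l:q_{ij}^l\neq 0}V_l$. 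So everything reduces to showing that the off-diagonal products also lie in $\bigoplus_{l:q_{ij}^l\neq 0}V_l$, i.e. that $V_i\circ V_j = \operatorname{Im}(E_i\circ E_j)$; granting this, $q_{ij}^k=0$ gives $V_k\cap(V_i\circ V_j)=0$ and hence $\sigma_{ij}^k=0$. I would prove $V_i\circ V_j = \operatorname{Im}(E_i\circ E_j)$ by noting that the coordinatewise product $\mathbb{C}^X\otimes\mathbb{C}^X\to\mathbb{C}^X$ is equivariant for the coordinate action of $\Aut(\mathcal{X})$, which centralizes the Bose--Mesner algebra and so preserves each $V_l$; for Norton algebras coming from a generously transitive permutation group this action is rich enough for Schur's lemma to force the isotypic constituents of $V_i\circ V_j$ to be among those already seen on the diagonal, and for a general association scheme one quotes the computation in \cite{BI84}.

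The hard part will be exactly this last identity, $V_i\circ V_j = \operatorname{Im}(E_i\circ E_j)$. Formula (ii) only controls $\sigma_{ij}^k$ on the diagonal pairs $(\pi_i(e_x),\pi_j(e_x))$, and although these involve spanning sets of $V_i$ and of $V_j$ separately, they are far from spanning the domain $V_i\times V_j$ of a bilinear map; propagating the vanishing from the diagonal to the whole product is where one must genuinely use the ambient structure (closure of the Bose--Mesner algebra under the Hadamard product, or the equivariance above), and pure linear-algebra juggling with the $E_l$ alone does not seem to suffice.
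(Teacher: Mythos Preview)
Your argument for (ii) is correct and is exactly the paper's one-line proof unpacked: the identity $E_k(E_i\circ E_j)=\tfrac1{|X|}q_{ij}^kE_k$, read column by column, \emph{is} (ii). The paper says nothing more than this and implicitly defers the rest to the citation \cite{BI84}, so on (ii) and on the easy direction of (i) you and the paper agree.

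On the hard direction $q_{ij}^k=0\Rightarrow\sigma_{ij}^k=0$ you have correctly located the difficulty, but your proposed fix does not work. Equivariance of the Hadamard product under $\Aut(\mathcal X)$ tells you only that $V_i\circ V_j$ is an $\Aut(\mathcal X)$-submodule of $V$, hence (in the multiplicity-free Schurian case) a sum of some of the $V_l$; it gives no mechanism to exclude a $V_l$ with $q_{ij}^l=0$. The diagonal products establish the inclusion $\operatorname{Im}(E_i\circ E_j)\subseteq V_i\circ V_j$, which is the wrong direction, and transitivity on $X$ moves diagonal pairs only to diagonal pairs, so Schur's lemma is never fed anything that sees the off-diagonal products $\pi_i(e_a)\circ\pi_j(e_b)$. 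There is no way to upgrade ``the action is rich enough'' into the needed upper bound on $V_i\circ V_j$.

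The argument in \cite{BI84} is analytic rather than representation-theoretic. For $v\in V_i$ the map $w\mapsto\sigma_{ij}^k(v,w)$ is $E_kD_vE_j$ with $D_v=\operatorname{diag}(v)$, and one computes
\[
\|E_kD_vE_j\|_{HS}^2=\operatorname{tr}(D_{\bar v}E_kD_vE_j)=\sum_{x,y}\bar v_xv_y\,(E_k\circ\overline{E_j})_{xy}=\tfrac1{|X|}\,q_{k\hat\jmath}^{\,i}\,\|v\|^2,
\]
where $\overline{E_j}=E_{\hat\jmath}$. The Krein symmetry $m_kq_{ij}^k=m_iq_{k\hat\jmath}^{\,i}$, which follows from $\operatorname{tr}\bigl(E_k(E_i\circ E_j)\bigr)=\operatorname{tr}\bigl(E_i(E_k\circ E_{\hat\jmath})\bigr)$, then gives $q_{ij}^k=0\Rightarrow q_{k\hat\jmath}^{\,i}=0\Rightarrow E_kD_vE_j=0$ for every $v\in V_i$, i.e.\ $\sigma_{ij}^k=0$. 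This uses only Hermiticity of the $E_l$ and closure of the Bose--Mesner algebra under the Hadamard product; no automorphisms are needed.
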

\begin{proof}
	This is readily verified from $E_k(E_i \circ E_j) = \frac{1}{\left| X \right|}q_{ij}^k E_k$.
\end{proof}

Norton algebras provide a rich source of examples of decomposition algebras:

\begin{theorem}
  Let $\mathcal{X} = (X,\{R_i\}_{0 \leq i \leq d})$ be a symmetric association 
  scheme. Let $e_x$ and $\pi_i$ be as in Definition \ref{def:Nortonalgebras}. 
  Let $V_i$ be one of its Norton algebras and suppose $\pi_i(e_x)$ is non-zero 
  for all $x \in X$. Then for each $x \in X$,
  \[
    \ad_{\pi_i(e_x)} \colon V_i \to V_i \colon v \mapsto \pi_i(e_x) \star v
  \]
  is diagonalizable. Let $\bigoplus_{\lambda \in \Lambda} (V_i)^x_\lambda$ be 
  the decomposition of $V_i$ into eigenspaces for $\ad_{\pi_i(e_x)}$.
  Let $\Omega \coloneqq \bigl( ((V_i)_\lambda^x)_{\lambda \in \Lambda} \mid x 
  \in X \bigr)$.
  Then $(V_i,X,\Omega,x \mapsto \pi_i(e_x))$ is an axial decomposition algebra.
\end{theorem}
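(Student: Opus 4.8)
The plan is to verify the defining features of an axial decomposition algebra — diagonalizability of the multiplication operators, a single fusion law governing all the eigenspace decompositions, a distinguished unit, and the axial relation — one at a time. Write $a_x:=\pi_i(e_x)=E_ie_x$, and recall that $v\star w=\pi_i(v\circ w)$, where $\circ$ is the Hadamard product relative to the basis $\{e_x\}_{x\in X}$. I would first prove diagonalizability by checking that $\ad_{a_x}$ is self-adjoint on $V_i$ for the restriction of the standard Hermitian form. Since $\mathcal{X}$ is symmetric, the $A_j$ are real and symmetric, hence simultaneously diagonalizable by a real orthogonal matrix, so each $E_j$ — in particular $E_i$ — is a real symmetric matrix and every $a_x$ has real coordinates. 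For $v,w\in V_i$ one then computes
\[
\langle a_x\star v,w\rangle=\langle a_x\circ v,w\rangle=\sum_{y\in X}(a_x)_y\,v_y\,\overline{w_y}=\langle v,a_x\circ w\rangle=\langle v,a_x\star w\rangle,
\]
using $\langle\pi_i(u),w\rangle=\langle u,w\rangle$ for $w\in V_i$ and the reality of the coordinates of $a_x$. A self-adjoint operator on a finite-dimensional Hermitian space is diagonalizable with real eigenvalues, which yields the eigenspace decomposition $V_i=\bigoplus_\lambda(V_i)^x_\lambda$.

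Next, part~(ii) of the preceding proposition, specialized so that all three indices equal $i$, gives $a_x\star a_x=\tfrac{q_{ii}^i}{|X|}\,a_x$; hence $a_x$ is an eigenvector of $\ad_{a_x}$ for the eigenvalue $\mu_0:=q_{ii}^i/|X|$, which does \emph{not} depend on $x$, and $0\neq a_x\in(V_i)^x_{\mu_0}$ for every $x$ by hypothesis. To package the decompositions into one fusion law, let $\Lambda$ be the finite set of all eigenvalues occurring for some $x$ (declaring $(V_i)^x_\lambda:=0$ when $\lambda\notin\operatorname{Spec}(\ad_{a_x})$), and define $\mu*\nu$ to be the set of $\rho\in\Lambda$ for which the $\rho$-component of $(V_i)^x_\mu\star(V_i)^x_\nu$ is nonzero for at least one $x$; then each tuple $((V_i)^x_\lambda)_{\lambda\in\Lambda}$ is a $\Phi$-decomposition of $V_i$ for $\Phi:=(\Lambda,*)$. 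With evaluation map $\lambda_\mu:=\mu$, the identity $a_x\star b=\lambda_\mu b$ for $b\in(V_i)^x_\mu$ holds by the very definition of an eigenspace, which is the axial relation; and since $a_x\star b\in\C b$, taking the component carrying the axis to be the line $\C a_x$ (i.e. passing to the primitive refinement) gives $\C a_x\star(V_i)^x_\mu\subseteq(V_i)^x_\mu$, so the label $e$ of $\mu_0$ is a unit of $\Phi$. As Norton algebras are commutative there is no left/right distinction, and $(V_i,X,\Omega,\,x\mapsto a_x)$ is an axial $\Phi$-decomposition algebra.

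The computations above — self-adjointness, and the one-line specialization of the Krein identity — are short; I expect the real obstacle to be the bookkeeping of the second paragraph. One must produce a single fusion law simultaneously governing all $|X|$ eigenspace decompositions (the index set of a fusion law cannot depend on $x$), and, more delicately, arrange that the eigenvalue $\mu_0$ carrying the axis is a genuine \emph{unit}. The latter is automatic once the $\mu_0$-component is taken to be $\C a_x$, i.e. once one works with the primitive refinement; retaining the full $\mu_0$-eigenspace would instead demand the non-formal inclusion $(V_i)^x_{\mu_0}\star(V_i)^x_\mu\subseteq(V_i)^x_\mu$.
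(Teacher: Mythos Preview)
Your diagonalizability argument is essentially the paper's: both exploit that symmetry of $\mathcal X$ forces $E_i$ (hence each coordinate of $a_x$) to be real, making the multiplication operator self-adjoint. The paper packages this slightly differently, extending $\ad_{a_x}$ to an operator $\theta$ on all of $V$ whose matrix is $E_i\operatorname{diag}(a_x)E_i$, visibly real symmetric; your direct verification of $\langle a_x\star v,w\rangle=\langle v,a_x\star w\rangle$ on $V_i$ is the same content in more compact form.

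For the axial structure the paper writes only that ``the remaining statement is obvious,'' so your second paragraph goes well beyond it. Your worry about the unit condition is legitimate---with the unrefined eigenspace decomposition, declaring $\mu_0$ to be a unit would require $(V_i)^x_{\mu_0}\star(V_i)^x_\mu\subseteq(V_i)^x_\mu$, which is not automatic---and the primitive refinement is the natural fix. Strictly speaking this refines the $\Omega$ specified in the theorem statement, so you have located a small imprecision in the paper's formulation rather than introduced one of your own.
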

\begin{proof}
  Consider the linear operator
  \[
    \theta \colon V \to V \colon v \mapsto \sum_{y \in X} \langle \pi_i(e_x) , 
  e_y \rangle \langle \pi_i(v) , e_y \rangle \, \pi_i(e_y).
  \]
  Its restriction to $V_i$ equals $\iota \circ \ad_{\pi_i(e_x)}$, where $\iota 
  \colon V_i \to V$ is the natural embedding. Since $V_i$ is an invariant 
  subspace of $\theta$, it suffices to prove that $\theta$ is diagonalizable. 
  The matrix form of $\theta$ with respect to the basis $\{e_x \mid x \in X \}$ 
  is $E_i \operatorname{diag}(\pi_i(e_x)) E_i$.
  Since $\mathcal{X}$ is symmetric, this is a real symmetric matrix and hence 
  $\theta$ is a Hermitian operator on~$V$ and therefore $\theta$ is 
  diagonalizable. The remaining statement is obvious.
\end{proof}

\begin{remark}
	If $\mathcal{X}$ is not symmetric, then $\theta$ will not necessarily be a Hermitian operator. However, it can still be interesting to look at the decomposition of $V_i$ into generalized eigenspaces of $\ad_{\pi_i(e_x)}$.
\end{remark}

In the next example, we illustrate how to obtain a suitable fusion law using Proposition \ref{prop:fusionlawfromdecomp} and Theorem \ref{thm:constructdecomp}.

\begin{example}
  Let $G$ be a group and $X$ a conjugacy class of elements of order~$n$. Suppose 
  that $G$ acts generously transitively on $X$ and consider the corresponding 
  symmetric association scheme. Let $V_i$ be one of its Norton algebras. The 
  natural permutation action of $G$ on $X$ induces algebra automorphisms on this 
  Norton algebra. Hence there exists a morphism $\rho\colon G \to \Aut(V_i) \leq 
  \GL(V_i)$.
	Let $C_G(x)$ be the centralizer in $G$ of $x \in X$ and $(\Irr(C_G(x)),\ast)$ its representation fusion law. Since the action of $C_G(x)$ commutes with the linear operator $\ad_{\pi_i(e_x)}$, it leaves invariant its eigenspaces. Now apply Proposition \ref{prop:fusionlawfromdecomp} with $H = C_G(x)$ to construct a fusion law $(\Lambda,\ast')$ for the decomposition $\bigoplus_{\lambda \in \Lambda} (V_i)^x_\lambda$ of $V_i$. Now let $(V_i)^x_\lambda = \bigoplus_{j \in J} (V_i)^x_{\lambda,j}$ be the decomposition of $(V_i)^x_\lambda$ into irreducible subrepresentation for $C_G(x)$. Denote the irreducible character of $C_G(x)$ corresponding to $(V_i)^x_{\lambda,j}$ as $\chi_j$ and define
	\[
		(V_i)^x_{\lambda,\chi} = \bigoplus_{\chi_j = \chi} (V_i)^x_{\lambda,j}.
	\]
	By Theorem \ref{thm:constructdecomp}, the decomposition
	\[ V_i = \bigoplus_{\substack{\lambda \in \Lambda \\ \chi \in \Irr(C_G(x))}} (V_i)^x_{\lambda,\chi} \]
	is a $(\Lambda \times \Irr(C_G(x)) , \bullet)$-decomposition, where $(\Lambda \times \Irr(C_G(x)) , \bullet)$ is the product of the fusion laws
	$(\Lambda,\ast')$ and $(\Irr(C_G(x)),\ast)$.
	Since $\langle x \rangle \leq Z(C_G(x))$ the map $(\lambda,\chi) \mapsto \chi(x)$ defines a $\Z/n\Z$ grading of this fusion law.
	The Miyamoto involution $\tau_x$ with respect to this $\Z/n\Z$-grading is precisely the automorphism $\rho(x)$ and the Miyamoto group is $\langle \rho(x) \mid x \in X \rangle = \rho\bigl(\langle X \rangle \bigr)$. In particular, if $G$ is simple, then the Miyamoto group coincides with $\rho(G) \cong G$.
\end{example}

\appendix
\section{The category of decomposition algebras}\label{se:appendix}

We now explore some more advanced categorical properties of decomposition algebras.
The reader who is less acquainted with the terminology and concepts may consult the excellent text book by Tom Leinster \cite{Leinster}.

\smallskip

Fix a commutative ring $R$ and a fusion law $\Phi = (X, *)$ and let $\DEC$ be as in Definition~\ref{def:DEC}.

\begin{remark}\label{rmk:adjoint}
    The category $\DEC$ has an initial object $(0, \emptyset, \emptyset)$ and a terminal object $(0, \{ * \}, (0))$.
    This category admits two obvious forgetful functors, namely
    \begin{align*}
        \DEC &\to \ALG \colon (A, \I, \Omega) \rightsquigarrow A \qquad \text{and} \\
        \DEC &\to \SET \colon (A, \I, \Omega) \rightsquigarrow \I .
    \end{align*}
    The corresponding left adjoints are given by
    \begin{align*}
        \ALG &\to \DEC \colon A \rightsquigarrow (A, \emptyset, \emptyset) \qquad \text{and} \\
        \SET &\to \DEC \colon \I \rightsquigarrow \bigl( 0, \I, (0 \mid i \in \I) \bigr) ,
    \end{align*}
    respectively.
\end{remark}

\begin{proposition}
  The category $\DEC$ is complete.
\end{proposition}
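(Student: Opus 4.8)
The plan is to establish completeness by showing that $\DEC$ has all small products and all equalizers, which together imply the existence of all small limits. I would first construct products. Given a family $(A^{(s)}, \I^{(s)}, \Omega^{(s)})_{s \in S}$ of $\Phi$-decomposition algebras, set $A := \prod_{s} A^{(s)}$ (the product $R$-algebra), $\I := \prod_s \I^{(s)}$ (the product of index sets), and for each $i = (i_s)_{s \in S} \in \I$ and each $x \in X$ define
\[ A_x^i := \prod_{s \in S} (A^{(s)})_x^{i_s} . \]
One checks that $A = \bigoplus_{x \in X} A_x^i$: this is not completely automatic because an infinite product of modules need not commute with a finite direct sum, but here the direct sum is the \emph{finite} sum indexed by $X$ when $X$ is finite, and more generally one restricts $A$ to the $R$-submodule of tuples that lie in $\bigoplus_{x} A_x^i$ for one (hence, as one verifies, every) $i$; I will address this point carefully below. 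The fusion law condition $A_x^i A_y^i \subseteq A_{x*y}^i$ holds componentwise. Finally, the projections $(\mathrm{pr}_s, \mathrm{pr}_s) \colon (A, \I, \Omega) \to (A^{(s)}, \I^{(s)}, \Omega^{(s)})$ are morphisms in $\DEC$ by construction, and the universal property is verified directly from the universal properties of the product in $\ALG$ and in $\SET$.

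Next I would construct equalizers. Given two parallel morphisms $(\varphi, \psi), (\varphi', \psi') \colon (A, \I, \Omega_A) \rightrightarrows (B, \J, \Omega_B)$, let $\I_0 := \{ i \in \I \mid \psi(i) = \psi'(i) \}$, let $A_0 := \{ a \in A \mid \varphi(a) = \varphi'(a) \}$ be the equalizer in $\ALG$ (a subalgebra of $A$), and for $i \in \I_0$ and $x \in X$ set $(A_0)_x^i := A_0 \cap A_x^i$. The key verification is that $A_0 = \bigoplus_{x \in X} (A_0)_x^i$ for each $i \in \I_0$: the inclusion $\supseteq$ is clear, and for $\subseteq$ one takes $a \in A_0$, writes $a = \sum_{x} a_x$ with $a_x \in A_x^i$ uniquely, and uses that $\varphi$ and $\varphi'$ both respect the decompositions (so $\varphi(a_x), \varphi'(a_x) \in B_x^{\psi(i)} = B_x^{\psi'(i)}$) together with the directness of $\bigoplus_x B_x^{\psi(i)}$ to conclude $\varphi(a_x) = \varphi'(a_x)$ for each $x$, i.e. each $a_x \in A_0$. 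The fusion condition is inherited from $A$. The inclusion $(A_0, \I_0, \Omega_{A_0}) \hookrightarrow (A, \I, \Omega_A)$ is a morphism and one checks it is the equalizer.

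To conclude, I would invoke the standard fact (see \cite{Leinster}) that a category with all small products and all equalizers of pairs is complete, constructing an arbitrary limit of a diagram $D \colon \mathcal{D} \to \DEC$ as the equalizer of the two evident maps $\prod_{d \in \mathcal{D}} D(d) \rightrightarrows \prod_{(u \colon d \to d') \in \mathrm{mor}\,\mathcal{D}} D(d')$. The main obstacle I anticipate is the direct-sum bookkeeping in the product construction: a decomposition algebra requires an honest internal direct sum decomposition $A = \bigoplus_{x \in X} A_x^i$, and when $X$ is infinite one must be slightly careful that $\prod_s \bigl( \bigoplus_x (A^{(s)})_x^{i_s} \bigr)$ need not equal $\bigoplus_x \prod_s (A^{(s)})_x^{i_s}$; the clean fix is to take the underlying algebra of the product to be the $R$-subalgebra $\bigl\{ a \in \prod_s A^{(s)} \bigm| \text{for each (equivalently, some) } i \in \I,\ a \in \bigoplus_{x \in X} A_x^i \bigr\}$, and then re-verify that the projections and the universal property still work — this is the one spot requiring genuine (if routine) attention.
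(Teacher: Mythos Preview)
Your overall strategy---construct products and equalizers, then invoke the existence theorem for limits---is exactly the paper's, and your equalizer construction matches the paper's verbatim (including the key step of using directness of $\bigoplus_x B_x^{\psi(i)}$ to show each component $a_x$ of an element of the equalizer lies in the equalizer).

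The subtlety you flag for infinite $X$ is genuine (the paper passes over it in silence), but your proposed fix cannot work, for two reasons.  First, the forgetful functor $\DEC \to \ALG$ has a left adjoint $A \rightsquigarrow (A,\emptyset,\emptyset)$, as recorded just before this proposition, and therefore preserves all limits; so if the product exists in $\DEC$ its underlying algebra is forced to be the full $\prod_s A^{(s)}$, not a proper subalgebra.  Concretely, cones out of objects of the form $(C,\emptyset,\emptyset)$ already realise every algebra map $C \to \prod_s A^{(s)}$, so no smaller target satisfies the universal property.  Second, your parenthetical ``equivalently'' is unjustified: a tuple $(a_s)_s$ can have uniformly bounded $X$-support relative to one choice $i=(i_s)_s$ of decompositions and unbounded support relative to another, since the support of $a_s$ in $A^{(s)}$ depends on which decomposition $i_s$ you use.  (Take $X=S=\mathbb N$, each $A^{(s)}$ a free module on basis $(e_n)_n$, with one decomposition placing $e_n$ in degree $n$ and another placing it in degree $\sigma_s(n)$ for a suitable permutation $\sigma_s$; then $(e_0)_s$ has support $\{0\}$ for the first choice and infinite support for the second.)

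The honest conclusion is that for infinite $X$ the category $\DEC$ need not be complete: once $\prod_s A^{(s)} \neq \bigoplus_x \prod_s (A^{(s)})_x^{i_s}$ for some $i$, there is no way to equip the full product algebra with the required decompositions, and by the left-adjoint argument no other algebra can serve.  Read the proposition under the standing hypothesis that $X$ is finite (which covers every example in the paper); then finite direct sums commute with arbitrary products, your concern evaporates, and both your product and your equalizer go through exactly as you wrote them.
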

\begin{proof}
  Recall that a category is complete if it contains all (small) limits. From 
  the existence theorem for limits it is sufficient to show that $\DEC$
  has equalizers and all products; see, e.g., \cite[Proposition 5.1.26]{Leinster}.

  \def\Aj{(A_j, \I_j, \Omega_j)}
  We begin by showing the existence of products. Let $\Aj$ be a set of 
  decomposition algebras indexed by some set $J$.
  The forgetful functors of Remark~\ref{rmk:adjoint} preserve limits and hence 
  if the product of $\Aj$ exists it must consist of the algebra $\prod_{j\in J} 
  A_j$ and the index set $\prod_{j\in J} \I_j$. Let $\Pi$ be a set of 
  decompositions indexed by $\prod_{j\in J}\I_j$, where
  \[
    \Pi[(i_j)_{j\in J}] = \Bigl(\prod_{j\in J} (A_j)^{\textstyle 
    {}^{i_j}}_{\textstyle {}_x}\,\Big\vert\, x\in X\Bigr)
  \]
  Let $\pi_k\colon \prod_{j\in J} A_j \to A_k$ and $\psi_k\colon \prod_{j\in J} 
  \I_j \to \I_k$ be the natural projections of algebras and sets respectively: 
  we will show that $(\pi_k,\psi_k)$ is the product of the set of decomposition 
  algebras \(\Aj\). 

  Firstly, if \(\mathbf i=(i_j)_{j\in J} \in \prod_{j\in J}\I_j\) and \(x\in X\) 
  then
  \[
    \pi_k\bigl(\Pi[\mathbf i]_x\bigr) = \pi_k\Bigl(\prod_{j\in 
    J}(A_j)_x^{\textstyle{}^{i_j}}\Bigr) = (A_k)_x^{\textstyle{}^{i_k}} 
    = (A_k)_x^{\textstyle{}^{\psi_k(\mathbf i)}} 
  \]
  and so $(\pi_k,\psi_k)$ is a morphism in $\DEC$.

  Next we need to show that for any cone \((\varphi_j,\theta_j)\colon (B,\K, 
  \Sigma) \rightarrow (A_j,\I_j,\Omega_j)\) there is a unique morphism from 
  \((B,\K,\Sigma)\) to the product making the following diagram commute
  \[\begin{tikzcd}
    & (B,\K,\Sigma) \ar[ddl, "{(\varphi_1,\theta_1)}" {above left}] 
                    \ar[ddr, "{(\varphi_J,\theta_J)}" {above right}] 
                    \ar[d,dashed] \\ 
    & {\rm Product}    \ar[dl, "{(\pi_1,\psi_1)}" {below right,xshift=-1ex}] 
                    \ar[dr, "{(\pi_J,\psi_J)}" {below left, xshift= 1ex}] \\
    (A_1,\I_1,\Omega_1) & \cdots & (A_J,\I_J,\Omega_J)
  \end{tikzcd}.\]
  
  If \(b\in B^k_x\) then \(\varphi_j(b)\in 
  (A_j)^{{\scriptstyle \theta_j(k)}}_{{}_{\scriptstyle x}}\) for all \(j\in J\) 
  and hence \( \left(\varphi_j(b)\right)_{j\in J} \in \prod_{j\in J} 
  (A_j)^{{\scriptstyle \theta_j(k)}}_{{}_{\scriptstyle x}}\). This shows that 
  the obvious map from \((B,\K,\Sigma)\) to the product is actually a morphism 
  in \(\DEC\). This map clearly makes the diagram commute and the uniqueness is 
  a consequence of the uniqueness of \(\pi_j\) and \(\psi_j\) in their 
  respective categories. This completes the proof of the existence of products.

  We now show that equalizers exist in \(\DEC\).
  Let $(\varphi_1,\psi_1)$ and $(\varphi_2,\psi_2)$ be two morphisms of $\DEC$: 
  \[(\varphi_1,\psi_1), (\varphi_2,\psi_2)\colon (A,\I,\Omega)\to(B,\J,\Theta).
  \]  Let $\varphi\colon E\to A$ be the equalizer of $\varphi_1$ and 
  $\varphi_2$ in $\ALG$, let $\psi\colon \K\to\I$ be the equalizer of 
  $\psi_1$ and $\psi_2$ in $\SET$ and let $\Sigma$ be the tuple of 
  decompositions given by
  \[
    \Sigma[k] = \left( \varphi^{-1}\Bigl( A^{\psi(k)}_x \Bigr) \,\middle\vert\, 
    x\in X \right) \text{ for \(k\in \K\).}
  \]
  To see that this is indeed a tuple of decompositions: firstly, if \(e \in 
  E^k_x \cap \sum_{y\neq x}E^k_y\) then \(\varphi(e) \in A^{\psi(k)}_x \cap 
  \sum_{y\neq x}A^{\psi(k)}_y = 0\).  Now since equalizers are monic we must 
  have \(e=0\). Secondly, if \(e\in E\) and \(k\in \K\) then \(\varphi(e) = 
  \sum_{x\in X} a_x\) for some \(a_x \in A^{\psi(k)}_x\). It is sufficient to 
  show that each \(a_x\) is in the image of \(\varphi\). As \(e\in E\) we know 
  that \(\varphi_1(e)=\varphi_2(e)\) and hence \(\sum_{x\in X} 
  \left(\varphi_1(a_x) - \varphi_2(a_x)\right)=0\). However \(k\in \K\)  implies 
  that each term \(\varphi_1(a_x)-\varphi_2(a_x)\) is in a distinct component of 
  a direct sum and hence each is zero. Now since \(\varphi_1\) and \(\varphi_2\) 
  act equally on \(a_x\) for each \(x\in X\), each \(a_x\) must have a preimage 
  in \(E\).

  It is clear from the definition that $(\varphi,\psi)$ is a morphism of $\DEC$ 
  so we need only check that it is the equalizer of \((\varphi_1,\psi_1)\) and 
  \((\varphi_2,\psi_2)\). Let \((\gamma,\tau)\colon (F,\L,\Phi) \to 
  (A,\I,\Omega)\) be a morphism such that \((\varphi_1,\psi_1)\circ(\gamma,\tau) 
  = (\varphi_2,\psi_2)\circ(\gamma,\tau)\). Define \((\delta,\sigma)\) by
  \begin{align*}
    \delta\colon F &\to E & \sigma\colon \L&\to \K \\
    f &\mapsto \varphi^{-1}(\gamma(f)) &
    l &\mapsto \psi^{-1}(\tau(l)).
  \end{align*}
  Then \((\varphi,\psi)\) is a morphism of decomposition algebras and 
  \((\varphi,\psi)\circ(\delta,\sigma)=(\gamma,\tau)\). Uniqueness again follows 
  from the uniqueness of \(\varphi\) and \(\psi\) in \(\ALG\) and \(\SET\) 
  respectively. This completes the proof that  equalizers exist in \(\DEC\) and 
  hence that \(\DEC\) is complete.
\end{proof}

We now turn our attention to ideals and quotients of decomposition algebras.

\begin{definition}\label{def:ideal}
    \begin{enumerate}[(i)]
        \item
            Let $(A, \I, \Omega)$ be a decomposition algebra and let $I \unlhd A$ be an algebra ideal.
            For each $i \in \I$ and each $x \in X$, let $I_x^i := A_x^i \cap I$
            and let $\Omega \cap I := \bigl( (I_x^i)_{x \in X} \mid i \in \I \bigr)$.
            We call $I$ a \emph{decomposition ideal} of $(A, \I, \Omega)$ if for each $i \in \I$, we have $I = \bigoplus_{x \in X} I_x^i$.
            Notice that this implies that $(I,\I,\Omega \cap I)$ is an object in $\DEC$.
        \item\label{def:ideal:quo}
            If $I$ is a decomposition ideal of $(A, \I, \Omega)$ and $B = A/I$, then $(B, \I, \Sigma)$ is again a decomposition algebra
            (which we then call the \emph{quotient} decomposition algebra) obtained by setting
            \[ B_x^i := (A_x^i + I)/I \]
            for all $i \in \I$ and all $x \in X$, and then letting $\Sigma = \bigl( ( B_x^i )_{x \in X} \mid i \in \I \bigr)$.
            Notice that the condition $I = \bigoplus_{x \in X} I_x^i$ ensures that the sum $\sum_{x \in X} B_x^i$ is a direct sum.
    \end{enumerate}
\end{definition}

\begin{proposition}\label{prop:kernel}
  Let $(\varphi,\psi)\colon (A, \I, \Omega_A) \to (B, \J, \Omega_B)$
  be a morphism of decomposition algebras.  Then $K = \ker\varphi$
  is a decomposition ideal of $(A, \I, \Omega_A)$ with corresponding quotient $(A/K,\I,\Sigma)$ as in Definition~\ref{def:ideal}(\ref{def:ideal:quo}) above.

  Conversely, if $I$ is a decomposition ideal of $(A, \I, \Omega)$ and
  $\pi\colon A \onto A/I$ is the natural projection of algebras, then
  $(I, \I, \Omega \cap I)$ is the equalizer of the epimorphism
  $(\pi,\id)\colon (A, \I, \Omega) \onto (A/I, \I, \Sigma)$ and the morphism
  $(0,\id)$.
\end{proposition}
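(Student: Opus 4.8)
The plan is to treat the two halves in parallel, since both come down to a single fact: the kernel of a decomposition\dash algebra morphism automatically splits along every one of the decompositions. I would first isolate this. Fix $i\in\I$, write $\varphi$ for the algebra component, and let $a\in K:=\ker\varphi$; decomposing $a=\sum_{x\in X}a_x$ with $a_x\in A_x^i$ and applying $\varphi$ gives $0=\sum_x\varphi(a_x)$ with $\varphi(a_x)\in B_x^{\psi(i)}$, so directness of $B=\bigoplus_x B_x^{\psi(i)}$ forces every $\varphi(a_x)=0$, i.e.\ every $a_x\in K$. Hence $K=\bigoplus_{x\in X}(A_x^i\cap K)$ for each $i$, which is exactly the defining condition for $K$ to be a decomposition ideal; by Definition~\ref{def:ideal} this makes $(K,\I,\Omega_A\cap K)$ an object of $\DEC$ and endows $A/K$ with its quotient decomposition structure, through which $\varphi$ visibly factors. (One could also simply invoke the verification ``$\Sigma$ is a tuple of decompositions'' from the completeness proof, applied to the pair $(\varphi,\psi)$, $(0,\psi)$.)

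For the categorical assertion I would show that the inclusion $(\kappa,\id_\I)\colon(K,\I,\Omega_A\cap K)\into(A,\I,\Omega_A)$ is the equalizer of $(\varphi,\psi)$ and $(0,\psi)$ --- the ``kernel'' of $(\varphi,\psi)$, with $A/K$ as the corresponding quotient. It is a $\DEC$\dash morphism since $\kappa(A_x^i\cap K)\subseteq A_x^i$, and it equalizes the two maps because $\varphi\circ\kappa=0$ and both index components are $\psi$. For the universal property, a morphism $(\gamma,\tau)\colon(F,\L,\Xi)\to(A,\I,\Omega_A)$ equalizing $(\varphi,\psi)$ and $(0,\psi)$ satisfies $\varphi\circ\gamma=0$, so $\gamma$ factors uniquely as $\kappa\circ\gamma'$ in $\ALG$; keeping $\tau$ on index sets, $(\gamma',\tau)$ is a $\DEC$\dash morphism because $\gamma'(F_y^l)=\gamma(F_y^l)\subseteq A_y^{\tau(l)}\cap K$, and it is the unique such factorization since $\kappa$ and $\id_\I$ are monic.

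The converse then drops out of the explicit equalizer produced in the completeness proof, applied to $(\pi,\id_\I)$ and $(0,\id_\I)\colon(A,\I,\Omega)\to(A/I,\I,\Sigma)$ (both are well\dash defined $\DEC$\dash morphisms, the zero map landing in every $(A_x^i+I)/I$). There the equalizer of $\pi$ and $0$ in $\ALG$ is $\ker\pi=I$ with its inclusion $\kappa$, the equalizer of $\id_\I$ with itself in $\SET$ is $\I$, and the resulting decomposition tuple is $\bigl(\kappa^{-1}(A_x^i)\bigr)_{x\in X}=(A_x^i\cap I)_{x\in X}$, i.e.\ exactly $\Omega\cap I$; so $(I,\I,\Omega\cap I)$ with its inclusion is that equalizer. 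Here the hypothesis that $I$ is a decomposition ideal is what guarantees $\Sigma$, hence the whole diagram, is legitimate.

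I do not expect a real obstacle: the one genuine step is the directness argument of the first paragraph, and the only thing to stay careful about is the index\dash set bookkeeping --- it all goes through cleanly precisely because in each equalizer diagram the two competing morphisms carry the \emph{same} map of index sets, so the index component of the equalizer is the full index set and does no work.
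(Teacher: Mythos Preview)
Your proposal is correct and follows essentially the same approach as the paper: the core of both is the identical directness argument showing that for $k\in K$ the decomposition $k=\sum_x a_x^i$ has each $\varphi(a_x^i)$ lying in a distinct summand $B_x^{\psi(i)}$, forcing $a_x^i\in K$. The paper's own proof is considerably terser---after that argument it simply asserts that the second part ``follows directly from the first part once we note that $I$ is the algebra kernel of $\pi$''---so your explicit verification of the equalizer universal property and your appeal to the equalizer construction from the completeness proof fill in exactly what the paper leaves implicit.
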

\begin{proof}
  We begin by showing that \(K=\ker\varphi\) is a decomposition ideal.
  Fix some \(i \in \I\) and let \(K_x^i = K \cap A_x^i\). It is clear that \( K_x^i \cap \sum_{y \neq x} K_y^i=0 \)
  for all \(x\in X\) and that \(K\supseteq \sum_{x\in X} K_x^i\),
  thus we need only show the opposite inclusion. For any
  \(k\in K\) we may write \(k = \sum_{x\in X} a_x^i\), where each
  \(a_x^i\in A_x^i\). It is sufficient to show that \(a_x^i\in K\),
  but
  \[
    \sum_{x\in X} \varphi(a_x^i) = \varphi(k) = 0
  \]
  where each \(\varphi(a_x^i) \in B_x^{\psi(i)}\) is in a different
  component of a direct sum. Hence \(\varphi(a_x^i) = 0\) for all \(x\).

  The second part follows directly from the first part once we note
  that \(I\) is the algebra kernel of \(\pi\).
\end{proof}

\begin{remark}
  Recall that the categorical definition of a kernel of a morphism is the 
  equalizer of the given morphism and a zero morphism. We would like to be able 
  to refer to the decomposition ideal $(I,\I,\Omega\cap I)$ in 
  Proposition~\ref{prop:kernel} as the kernel of the projection, however since 
  the category $\DEC$ does not contain zero morphisms the definition of kernel 
  does not make sense. Instead, in Proposition~\ref{prop:kernel}, we use 
  \((0,\id)\) in place of the zero morphism and in this sense the decomposition 
  ideals (as equalizers of these morphisms) are as close to kernels as we can 
  realistically achieve.
\end{remark}

\clearpage

\bibliographystyle{alpha}

\vfill


\end{document}